\documentclass[a4paper,12pt]{article}

\usepackage{a4,amssymb,amsmath,amsthm,url,xcolor,enumerate,float,lineno}
\usepackage{bezier,amsfonts,amssymb,graphicx,amsthm,url}
\usepackage[utf8]{inputenc}
\usepackage{amsmath}
\usepackage{amsfonts}
\usepackage{amssymb,amsthm}
\usepackage{url}


\newtheorem{theorem}{Theorem}[section]

\newtheorem{corollary}[theorem]{Corollary}
\newtheorem{lemma}[theorem]{Lemma}
\newtheorem{theorem*}{Theorem}
\newtheorem*{theoremx}{Theorem}
\newtheorem*{theorem1}{Theorem 1}
\newtheorem*{theorem2}{Theorem 2}

\def\kaxxa{{\vcenter {\hrule height .2mm
\hbox{\vrule width .2mm height 2mm \kern 2mm
\vrule width .2mm} \hrule height .2mm}}}

\title{The feasibility problem for line graphs}
\author{Yair Caro\\ University of Haifa-Oranim \and Josef Lauri \\ University of Malta \and Christina Zarb \\ University of Malta}
\date{ }

\DeclareGraphicsExtensions{.pdf,.png,.jpg}


\begin{document}



\maketitle

\begin{abstract}
We consider the following feasibility problem: given an integer $n \geq 1$ and an integer $m$ such that $0 \leq m \leq \binom{n}{2}$, does there exist a line graph $L = L(G)$ with exactly $n$ vertices and $m$ edges ?

We say that a pair $(n,m)$ is non-feasible if there exists no line graph $L(G)$ on $n$ vertices and $m$ edges, otherwise we say $(n,m)$ is a feasible pair.  Our main result shows that for fixed $n\geq 5$, the values of $m$ for which $(n, m)$ is a non-feasible pair, form disjoint blocks of consecutive integers which we completely determine.  On the other hand we prove, among other things, that for the more general family of claw-free graphs (with no induced $K_{1,3}$-free subgraph), all $(n,m)$-pairs in the range $0 \leq  m \leq \binom{n}{2}$ are feasible pairs.

\end{abstract}



\section{Introduction}

We consider the feasibility problem for line graphs, with some extensions to other families to demonstrate the context which is an umbrella for many well-known graph theoretic  problems.  All graphs in this paper are simple graphs, containing no loops or multiple edges.   

\bigskip

\emph{The feasibility problem}:

 \bigskip

Given $F$ a family of graphs and a pair $(n,m)$, $n \geq 1$, $0 \leq m \leq \binom{n}{2}$,   the pair $(n,m)$  is called \emph{feasible} (for $F$) if there is a graph $G \in  F$, with $n$ vertices and $m$ edges.  Otherwise $(n,m)$ is called a \emph{non-feasible} pair.  A family of graphs $F$ is called feasible if for every $n \geq 1$, every pair $(n,m)$ with $0 \leq  m \leq \binom{n}{2}$  is feasible,  and otherwise $F$ is called non-feasible.   The problem is to determine whether $F$ is feasible or not, and to find all feasible pairs, respectively non-feasible pairs for $F$.

 \bigskip

\emph{The minimum/maximum  non-feasible pair problem }:

 \bigskip

For a non-feasible family $F$, determine for any $n$ the minimum/maximum value of $m$ such that the pair $(n,m)$  is non-feasible.

An immediate example is the family of all graphs having no copy of a fixed graph $H$  which is a major problem in extremal graph theory.  Here, if $ex(n,H)$ is the corresponding Tur{\'a}n number then $m = ex(n,H) +1$ is the smallest $m$ such that the pair $(n,m)$ is non-feasible, and of course these``H-free families" are all non-feasible families \cite{furedi2013history,turan1941external}. 
A simpler example is that of the family of all connected graphs.  Here we know that given $n$ and  $n-1 \leq m \leq \binom{n}{2}$, the pair $(n,m)$ is feasible, namely realised by a connected graph, however for $0 \leq  m \leq  n-2$, no pair $(n,m)$ is feasible and the maximum value of $m$ for a given $n$ which gives a non-feasible pair is $m=  n-2$.

However there are many more related problems already discussed in the literature of  which we mention for example \cite{axenovich2021absolutely,barrus2008graph,ERDOS199961}.

So the reader can think of many other problems that can be formulated as feasibility problems in the framework suggested above.

In Section 2 we collect some basic facts about feasible families and we further illustrate  this notion of feasibility by demonstrating, for example, that the  families of $K_{1,3}$-free graphs, chordal graphs, and paw-free graphs (a paw is the graph which consists of $K_3$ and an attached leaf) are all feasible families.

Yet our main motivation is to fully characterize the non-feasible pairs $(N,M)$ for the  family of all line graphs.   Here we stress that we reserve along the paper, the notation $( N,M)$ for a line graph $L(G)$  having $N$ vertices and $M$ edges  to make a clear distinction from the underlying graph $G$ having $n$ vertices and $m$ edges where $m = N$.  Observe that isolated vertices in the underlying graph have no role.  The family of all line graphs is easily seen to be non-feasible  since already  the pair $(N,M) =  (5,9)$ is realised only by the graph $K_5 \backslash \{e\}$ (where $e$ is an edge) which is not a line graph and belongs to the famous list of Beineke forbidden subgraphs characterizing line graphs \cite{beineke70,cvetkovic1974some,FAUDREE199787,SOLTES1994391,LAI200138}.

Some classical  simple results  will be frequently used --- we mention  the following here: 

 \begin{itemize}
\item[-]{Fact 1:  If $G$ is a graph on $n(G) = n$ vertices and $e(G) = m$ edges with degree sequence $d_1 \leq \ldots \leq d_n$, then  the line graph $L(G)$  has $n(L(G)) = m$ vertices and $e(L(G))  =  \sum_{j=1}^{n} \binom{d_j}{2}$. \cite{harary1969graph} }
 \item[-]{Fact 2:  A sequence of $n +1$  positive integers  $1 \leq d_1 \leq \ldots \leq d_{n+1}$  is the degree sequence of a tree if and only if  $\sum_{j=1}^{n+1}d_j= 2n$. \cite{bose2008characterization} } 
 \end{itemize}


Quite naturally, by Fact 1 and  in the context of line graphs,  the feasibility of a pair  $(N,M)$  is closely related  to the number-theoretic problem of representing non-negative integers by a sum of triangular numbers,  which dates back to Gauss \cite{ewell} who proved that every  non-negative integer $n$ is representable by the sum of at most three triangular numbers (hence by exactly three triangular numbers).  For further details see \cite{REZNICK1989199}.

Yet we stress that the main distinction between the number-theoretic problem  and the feasibility problem for line graphs of all graphs lies in the fact that in order to have a line graph  $L(G)$ with $N$ vertices and $M$ edges, making $( N,M)$ a feasible pair we require that $\sum_{j=1}^{s} d_ j  = 2N$  is obtained by a graphical degree sequence with a realizing/underlying graph  $G$ having $m=N$ edges (the number of vertices is not important) and the line graph $L(G)$ has $n(L(G)) =  N$  while $\sum_{j=1}^{s} \binom{d_j}{2} =  M$.   For example none of the representations of the integer 9  as a sum of triangular numbers $\sum_{j=1}^{s} \binom{x_ j}{2}  = 9$  belongs to any  graphical sequence realised  by a graph having exactly 5 edges, as this would imply that  $K_5 \backslash \{e\}$ (where $e$ is an edge) is a line graph, which it is not.


Section 3  is a sort of warmup to the main result, allowing us to exhibit the main tools and methods of the  proof of the main theorem of this paper.  In this section we consider  a lower bound and upper bound for the minimum non-feasible pair $(N,M)$ for the family of line graphs of all acyclic graphs, and show that there are positive constants  $0 < c_1 \leq c_2$  such that for given $N$, the minimum  $M$ which makes $(N,M)$ non-feasible satisfies $\frac{N^2}{2} -  c_2N\sqrt{N}  \leq M \leq   \frac{N^2}{2}  -  c_1N\sqrt{N}$   and these bounds can be compared with \cite{REZNICK1989199}.  So, already,  non-feasible pairs $(N,M)$  for the line graphs of acyclic graphs are possible only for  $M  \geq    \binom{N – c\sqrt{N}}{2}$.

Section 4 deals with the feasibility problem for the family of all line graphs, the main aim of this paper, and requires several preparatory lemmas before proving the following main theorems of this paper which we state here. (If $a<b$ are positive integers, then $[a,b]$ will denote the set of all integers $p$ such that $a \leq p \leq b$.)

 \begin{theorem1}[The Intervals Theorem] 
For $N  \geq 5$,  all the values of $M$ for which $(N, M)$ is a non-feasible pair for the family of all line graphs, are exactly  given by all integers $M$ belonging to the following intervals:
\[\left [ \binom{N-t}{2} +\binom{t+2}{2}, \ldots,  \binom{N-t+1}{2}-1 \right ] \mbox{ for }  1 \leq t < \frac{ -5 + \sqrt{8N +17}}{2}\]

Observe that if  $\frac{ -5 + \sqrt{8N +17}}{2}$ is not an integer then $t= \left \lfloor \frac{ -5 + \sqrt{8N +17}}{2} \right \rfloor$ while if $\frac{ -5 + \sqrt{8N +17}}{2}$ is an integer then $t=\frac{ -5 + \sqrt{8N +17}}{2}-1$.

\end{theorem1}


\begin{theorem2}[The minimum non-feasible pair]
For  $N \geq 2$,  the minimum  value of $M$ which makes $(N,M)$ a non-feasible pair, for the family of all line graphs,  is $\binom{N-t}{2} + \binom{t +2}{2}$  where :
\begin{enumerate}
\item{$t  = \left \lfloor \frac{ -5 + \sqrt{8N +17}}{2} \right \rfloor$    if  $\frac{ -5 + \sqrt{8N +17}}{2}$  is not an integer.}
\item{$t =   \frac{ -5 + \sqrt{8N +17}}{2}  - 1$ if  $\frac{ -5 + \sqrt{8N +17}}{2}$  is an integer.}
\end{enumerate}
\end{theorem2}

The following intervals, demonstrating Theorem 1 with $N = 27$, give all  the values of $M$ for which the pairs $ (27,M)$  are non-feasible for all line graphs: $ t=5 \mbox{ } [252]  ,  t=4 \mbox{ }[267-275]  , t=3 \mbox{ }[ 286-299 ] , t=2\mbox{ } [ 306-324] , \mbox{ } t=1\mbox{ } [328-351]$.

The following table, demonstrating Theorem 2, gives for  $N \leq 30$, the values of  $k$  and the smallest value of $M$ for which $(N,M)$  is a non-feasible pair for the family of all line graphs.
\bigskip

\begin{minipage}[c]{0.33\textwidth}
\begin{tabular}{|c|c|}
\hline

N  & M\\
\hline

1  &*\\2  &*\\3  &*\\4 &* \\ 5&9 \\6&13 \\7&18 \\8&24\\9&27\\10&34\\
\hline
\end{tabular}
\end{minipage}
\begin{minipage}[c]{0.33\textwidth}
\begin{tabular}{|c|c|}\hline

N &M\\
\hline
11&42\\12&51\\13&61\\14&65\\15&76\\16&88\\17&101\\18&115\\19&130\\20&135\\
\hline
\end{tabular}
\end{minipage}
\begin{minipage}[c]{0.33\textwidth}
\begin{tabular}{|c|c|}\hline

N &M\\
\hline
21&151\\22&168\\23&186\\24&205\\25&225\\26&246\\27&252\\28&274\\29&297\\30&321\\
 \hline
\end{tabular}
\end{minipage}

\bigskip

It is worth noting that  our proof has a part in which we used a computer program to check both theorems up to  $N = 35$ since the computations and estimates we used in the  proof   apply for $N \geq 33$ (though with further efforts it can be reduced to $N \geq 24$  which we decided to avoid).


We shall mostly follow the standard graph-theoretic notations and definitions as used in \cite{west2001}.  Recall $n(G)$, $e(G)$, $\Delta(G)$  and $\delta(G)$  are the number of vertices, number of edges, maximum degree and minimum degree of $G$ respectively.  Whenever there is  distinction we shall make the notation/definition clear in the body of the paper when the term or definition first appears.

\section{Basic Feasible Families}

We say that a graph $G$ is induced $H$-free or just $H$-free (when there is no ambiguity) if $G$ has no induced copy of $H$. 


\medskip

\noindent \textbf{The universal elimination procedure (UEP)}

\medskip

We start with $K_n$  and order the vertices $v_1,\ldots, v_n$.  We now delete at each step an edge incident with $v_1$ until $v_1$ is isolated.  We then repeat the process of step by step deletion of the edges incident with $v_2$, and continue until we reach  the empty graph on $n$ vertices.

Along the process, for any pair $( n,m)$, $ 0  \leq m \leq \binom{n}{2}$, we have  a graph $G$ with $n$ vertices and $m$ edges.
\begin{lemma}

The maximal induced subgraphs of $K_n$ obtained when applying  UEP on $K_n$  are of the form $H(p,q,r) =  ( K_p  \backslash S_q )  \cup rK_1$, where $S_q$ is the star on $q$ edges, $p \geq q \geq 0$  and $p +r = n$.   
\end{lemma}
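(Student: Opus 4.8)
The plan is to prove this by tracking a single invariant through the UEP and showing it is preserved. Start with $K_n = H(n,0,0)$: the vertices are ordered $v_1, \dots, v_n$, and initially $v_1$ has degree $n-1$, so the complete graph itself is of the claimed form with $p = n$, $q = 0$, $r = 0$. First I would make precise what ``maximal induced subgraph obtained when applying UEP'' means: after we finish processing $v_1, \dots, v_{i-1}$ (so these are now isolated) and are part-way through deleting edges at $v_i$, the graph is $G$; among the graphs isomorphic to $G$ that appear along the process, the first one of a given isomorphism type is the ``maximal'' representative. The key structural observation is that once $v_1, \dots, v_{i-1}$ have been isolated, the remaining non-trivial part of the graph lives entirely on $\{v_i, \dots, v_n\}$ and, crucially, on those vertices every pair except those incident to $v_i$ is still present as an edge (we have not yet touched them). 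So at the start of processing $v_i$ the graph is exactly $K_{n-i+1} \cup (i-1)K_1 = H(n-i+1, 0, i-1)$.

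The main work is to describe what happens \emph{during} the processing of $v_i$. We delete edges incident to $v_i$ one at a time; after deleting $q$ of them, $v_i$ is joined to $n - i - q$ of the vertices $v_{i+1}, \dots, v_n$, while those $n-i$ vertices still induce a complete graph $K_{n-i}$ among themselves. Hence the non-trivial component is $K_{n-i}$ together with one extra vertex $v_i$ adjacent to $n-i-q$ of its vertices — equivalently, take $K_{n-i+1}$ on vertex set $\{v_i\} \cup \{v_{i+1},\dots,v_n\}$ and delete the $q$ edges from $v_i$ to a $q$-subset, i.e. delete a star $S_q$ centred at $v_i$. Together with the $i-1$ isolated vertices this is precisely $H(n-i+1,\, q,\, i-1)$, and here $p = n-i+1 \geq q \geq 0$ and $p + r = (n-i+1) + (i-1) = n$, as required. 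Running $i$ from $1$ to $n$ and $q$ from $0$ to $n-i$ (with the terminal state $q = n-i$ coinciding with the $q=0$ state of the next stage, namely $H(n-i,0,i)$) sweeps out exactly the family $\{H(p,q,r) : p \geq q \geq 0,\ p + r = n\}$, and every such triple is hit.

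I expect the only real obstacle to be bookkeeping rather than mathematics: one must be careful that deleting a star at $v_i$ does not disturb the edges among $v_{i+1}, \dots, v_n$ (true, since UEP only ever deletes edges incident to the current vertex), and that the isolated vertices accumulated so far stay isolated (true, since their incident edges were all removed in earlier stages and nothing re-adds edges). A secondary point is the word ``maximal'': one should note that a given isomorphism type $H(p,q,r)$ can recur — for instance $H(p,q,r)$ with $q = p$ is an empty graph on $n$ vertices regardless — but the statement is about the \emph{form} of the subgraphs that occur, so it suffices to exhibit, for each valid $(p,q,r)$, the moment in the process at which a graph of that form appears; the argument above does exactly that with $i = n - p + 1$ and the appropriate $q$. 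I would close by remarking that the degree sequence of $H(p,q,r)$ is immediate ($q$ vertices of degree $p-1-1$... more precisely $q$ vertices of degree $p-2$, one vertex of degree $p-1-q$, $p-q-1$ vertices of degree $p-1$, and $r$ zeros), since this is what the later feasibility arguments will actually consume.
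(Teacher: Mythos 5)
Your proposal is correct and follows the same route the paper intends: the paper dismisses the lemma as immediate from the description of UEP, and your argument simply spells out that verification by tracking the invariant that, part-way through processing $v_i$, the graph is $(K_{n-i+1}\setminus S_q)\cup(i-1)K_1$. The only quibble is the aside about the case $q=p$ (a star with $p$ edges does not embed in $K_p$; the empty stages arise as $H(p,p-1,r)$ or $H(1,0,n-1)$), but this does not affect the argument.
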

 
\begin{proof}
This is immediate from the definition and description of UEP.
\end{proof}

Already the  UEP supplies many feasible families as summarized in the following corollary.

\begin{corollary}
The following families of graphs obtained by  applying the  UEP  are feasible:
\begin{enumerate}
\item{ induced  $K_{1,r}$-free for $r\geq 3$, where $K_{1,r}$ is the star with $r$ leaves.}
\item{  induced  $P_r$-free  for $r \geq 3$, where $P_r$ is the path on $r$ edges.}
\item{ induced  $rK_2$-free for $r \geq 2$ where $rK_2$ is the union of $r$ disjoint edges.} 
\item{chordal (reference to chordal graphs \cite{west2001}).}
\end{enumerate}   
\end{corollary}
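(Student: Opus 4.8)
The plan is to reduce everything to Lemma~2.1. By that lemma, together with the paragraph preceding it, for each pair $(n,m)$ with $0 \le m \le \binom{n}{2}$ the UEP produces a graph of the form $H(p,q,r) = (K_p \backslash S_q) \cup rK_1$ with $p+r = n$ having exactly $n$ vertices and $m$ edges. Hence each of the four families is feasible provided \emph{every} graph $H(p,q,r)$ belongs to it, and this is what I would establish from a single structural fact about $K_p \backslash S_q$.

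The fact is that in $K_p \backslash S_q$ every non-adjacent pair of vertices contains the centre $v$ of the deleted star: removing from $K_p$ a star of $q$ edges at $v$ leaves the remaining $p-1$ vertices mutually adjacent and leaves $v$ adjacent to all but $q$ of them, so the only non-edges are the $q$ pairs through $v$. This property is hereditary: an induced subgraph $F$ of $K_p \backslash S_q$ is either complete or contains $v$, and in the latter case every non-edge of $F$ is incident with $v$; in particular the non-edges of $F$ have a common vertex. Moreover, since every forbidden graph considered below has minimum degree at least $1$, any induced copy of it inside $H(p,q,r)$ must avoid the isolated vertices and therefore lie entirely within $K_p \backslash S_q$.

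It now suffices to check, for each item, that the relevant forbidden graph is not complete and that its non-edges do not all pass through a single vertex; by the previous paragraph it then cannot occur as an induced subgraph of $H(p,q,r)$. For $K_{1,3}$ the three leaves are pairwise non-adjacent, so its non-edges have empty common intersection. For $P_3$ (the path with $3$ edges, on vertices $a,b,c,d$) the non-edges $\{a,c\}$ and $\{b,d\}$ are disjoint. For $2K_2$ with edges $ab,cd$ the non-edges $\{a,c\}$ and $\{b,d\}$ are disjoint. For $C_k$ with $k \geq 4$, labelling the cycle $v_1 v_2 \cdots v_k$, the chords $\{v_1,v_3\}$ and $\{v_2,v_4\}$ are disjoint non-edges. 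Hence every $H(p,q,r)$ is induced $K_{1,3}$-free, $P_3$-free, $2K_2$-free and chordal (recall that chordal means having no induced $C_k$ with $k \geq 4$). Finally, an induced $K_{1,r}$ with $r \geq 3$ contains an induced $K_{1,3}$, an induced $P_r$ with $r \geq 3$ contains an induced $P_3$, and an induced $rK_2$ with $r \geq 2$ contains an induced $2K_2$, so all four families are feasible.

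The whole argument is routine; the only subtlety is that for the star $K_{1,3}$ one must phrase the obstruction as ``the non-edges have no common vertex'' rather than ``there exist two disjoint non-edges'', since the three non-edges of $K_{1,3}$ pairwise intersect --- for the other three items the stronger disjoint form is available and slightly more transparent. The accompanying bookkeeping about the added isolated vertices is entirely trivial given the minimum-degree remark above.
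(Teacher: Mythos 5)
Your argument is correct and is essentially the paper's own approach: the paper simply declares the corollary immediate from Lemma~2.1 and the description of UEP, i.e.\ that every graph produced is of the form $H(p,q,r)=(K_p\backslash S_q)\cup rK_1$ and lies in each of the listed families. You have merely supplied the routine verification the paper leaves implicit (all non-edges of $K_p\backslash S_q$ share the star centre, while $K_{1,3}$, $P_3$, $2K_2$ and $C_k$ with $k\geq 4$ each have non-edges with no common vertex), and that verification is sound.
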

\begin{proof}
This is immediate from the definition and description of UEP.
\end{proof}

 


We now show that the family of all induced paw-free graphs is a feasible family, depite the paw graph itself being $H(4,2,0 )$ and hence cannot be proven using UEP.

\begin{theorem}
The family of all induced paw-free graphs is feasible.
\end{theorem}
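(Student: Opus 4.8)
The plan is to show that for every $n\ge 1$ and every $m$ with $0\le m\le\binom n2$ there is a paw-free graph on $n$ vertices and $m$ edges, by explicitly exhibiting a convenient family of paw-free graphs whose edge counts sweep out the whole range. Recall that the paw is $K_3$ with a pendant edge; a classical fact (Olariu) is that a connected graph is paw-free iff it is triangle-free or a complete multipartite graph. This suggests using two reservoirs of paw-free graphs: on the low end, triangle-free graphs (in particular the \emph{Tur\'an-type} bipartite graphs and subgraphs thereof), and on the high end, complete multipartite graphs together with a few isolated vertices. Since the disjoint union of paw-free graphs is paw-free precisely when we are careful — actually a disjoint union of paw-free graphs is always paw-free, because an induced paw is connected — we are free to pad with isolated vertices, so only $m$ matters and $n$ can be taken as large as we like.

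First I would handle small $m$: for $0\le m\le\lfloor n^2/4\rfloor$ one can realise $m$ as the number of edges of a subgraph of the complete bipartite graph $K_{\lceil n/2\rceil,\lfloor n/2\rfloor}$ on the same vertex set (add edges one at a time), and every subgraph of a bipartite graph is bipartite, hence triangle-free, hence paw-free. This already covers a large initial segment. For the upper range I would use complete multipartite graphs: as we pass through the sequence $K_1, K_{1,1}, K_{2,1}, K_{2,2}, K_{3,2}, \ldots$ (a ``balanced growth'' of complete multipartite graphs, or more simply the Tur\'an graphs $T(n,r)$ for $r=2,3,\ldots,n$ interpolated by moving one vertex between parts at a time) the edge count increases in steps of size at most the current maximum part size, which is at most the number of vertices; combined with the freedom to choose $n$ this lets us hit every value of $m$ up to $\binom n2$ (attained by $K_n$ itself, which is complete multipartite). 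The one point that needs care is the ``seam'' where the bipartite construction stops ($m\approx n^2/4$) and the multipartite construction must take over: I would check that the complete tripartite graphs (and, as we add vertices, the whole ladder of complete multipartite graphs) leave no gaps there, which is a short monotonicity-of-increments computation.

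A cleaner way to organise the whole argument, avoiding case analysis at the seam, is: fix a target $m$, choose $n$ with $\binom n2\ge m$, and build a graph on $n$ vertices by the following greedy/continuity argument. Order the $\binom n2$ potential edges so that, starting from $\overline{K_n}$ and adding them one by one, we pass only through paw-free graphs; then after exactly $m$ additions we have our graph. Concretely, add all edges inside a part-structure in a ``fill one complete multipartite graph, then split a part'' order: this is essentially the complement of UEP but tailored so that every intermediate graph is complete multipartite plus isolated vertices, hence paw-free. The existence of such an ordering reaching all of $[0,\binom n2]$ is what must be verified, and it follows from the fact that complete multipartite graphs on a fixed vertex set, ordered by refinement of the partition, have edge counts that change by $1$ at each elementary ``move one vertex to a new/other part'' step when done greedily.

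The main obstacle I expect is precisely the gap-freeness at the transition between the triangle-free regime and the complete-multipartite regime — that is, proving the increments never jump over an integer. This is not deep but it is where a naive construction can fail: e.g. jumping from $K_{k,k}$ directly to $K_{k,k,1}$ changes the edge count by $2k$, not by $1$, so one must interpolate through subgraphs or through unbalanced complete multipartite graphs. The resolution is to insert, between consecutive complete multipartite graphs in the ladder, their ``in-between'' graphs obtained by adding the new part's edges one at a time; one then checks these intermediate graphs are still paw-free (they are complete multipartite on a subset of vertices together with a vertex of smaller degree, and a short check using Olariu's characterisation — or a direct look for an induced paw — confirms it), and that consecutive edge counts differ by exactly $1$. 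With that lemma in hand the theorem is immediate.
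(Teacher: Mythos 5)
There is a genuine gap, and it sits exactly where you yourself predicted trouble. The interpolation lemma your argument hinges on is false: adding the edges of a new vertex one at a time to a complete multipartite graph does not keep the graph paw-free once that graph contains a triangle. The smallest counterexample is already fatal: $K_{1,1,1}=K_3$ plus a vertex joined to exactly one of its three vertices \emph{is} the paw; more generally, if $v$ is adjacent to a vertex $a$ of a complete multipartite graph with at least three parts but misses two adjacent vertices $b,c$ lying outside $a$'s part, then $\{v,a,b,c\}$ induces a paw. So your ``in-between'' graphs are not paw-free, and the gap-freeness at the seam is not established. A second, independent problem is the claim that ``$n$ can be taken as large as we like'': padding with isolated vertices only lets you realise the pair $(n,m)$ from a paw-free graph with \emph{at most} $n$ vertices and exactly $m$ edges, so this freedom evaporates precisely in the dense regime $m$ close to $\binom{n}{2}$, where nearly all $n$ vertices are needed, triangle-free graphs are unavailable, and the faulty interpolation is the only tool you have left.

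The repair --- and the paper's actual route --- is to avoid the seam altogether: cover only the top window $\left[\binom{n-1}{2}+1,\binom{n}{2}\right]$ (which has length $n-1$) by dense paw-free graphs, and obtain every $m\le\binom{n-1}{2}$ by induction on $n$, padding with an isolated vertex. For the top window one deletes from $K_n$ an induced $qK_3\cup pK_2$: such a deletion removes $t=3q+p$ edges using $3q+2p\le n$ vertices, and a short case analysis on $n \bmod 3$ shows every $t$ with $0\le t\le n-2$ is attainable; a direct check (or your Olariu viewpoint) shows the resulting graphs are paw-free, since they are exactly the complete multipartite graphs all of whose parts have size at most $3$. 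In other words, the dense paw-free graphs you need are only these small-part complete multipartite graphs, whose edge counts already fill the whole window in steps of $1$, so no risky intermediate graphs and no balancing of Tur\'an-type ladders is required.
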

\begin{proof}
We proceed  by induction on the number of vertices $n$.  It is trivial for $n = 1 , 2 , 3$.  So assume it is proven up to $n-1$ and we prove it for $n$. 

We will show first that all the pairs $( n,m)$  where  $\binom{n -1}{2}+1 \leq m \leq  \binom{n}{2}$ are feasible.  Let the vertices of the graph be $v_1, \ldots v_n$.  Then we can delete all edges incident with $v_n$, and apply induction on the range $m \leq  \binom{n -1}{2}$ and the extra isolated vertex $v_n$.

So we will show that we can delete $t$ edges from $K_n$, $0 \leq t \leq n -2$, without having an induced paw. This will cover the required range between $\binom{n-1}{2}+1$ and $\binom{n}{2}$.  Let  $n = 3k +r$  where $0\leq r < 3$.  We claim that for every  $t =  3q + p$,   $0\leq t \leq n-2$, we can delete induced  $qK_3 \cup pK_2$ from $K_n$.  Indeed consider the following cases:
\begin{enumerate}
\item{if $r  = 1$  then we only have to check that we can delete $3k - 1$ edges.  Take $(k-1)K_3$ and we still have 4 vertices untouched. So we can take  a further $2K_2$, altogether $3(k-1)+2 = 3k -1  = n-2$.  }
 \item{if $r  = 2$ then we only have to check that  we can delete $3k$ edges.  Take $kK_3$  altogether $3k = n-2$ edges.}

\item{if $r = 0$  then we only have to check that we can delete $3k -2  = 3(k-1) +1$ edges.  Take $(k-1)K_3$ and we still have 3 vertices  untouched so we can remove $K_2$ giving altogether $3(k-1) +1 = 3k - 2 = n-2$.}
\end{enumerate}

Consider the graphs obtained by removing the edges.  Suppose there is an induced paw.  Let $z$ be the vertex of degree 1 in the induced paw, which is adjacent to a vertex $w$ of degree 3 in the induced paw, and $w$ is adjacent to two other vertices $x$ and $y$ of degree 2 in the induced paw, with $z$ not adjacent to either $x$ or $y$.  Then in the graphs above, obtained by removing $qK_3 \cup pK_2$, the vertex $z$ must be connected to some $K_3$ via exactly one edge, missing exactly two edges (incident to $z$ in the complete graph) to this $K_3$.  These two missing edges make up an induced $K_{1,2}$.  However this is impossible since the missing edges form $qK_3 \cup pK_2$ by construction.

\end{proof}


\section{Non-feasibility of the family of all line graphs of acyclic graphs}

We start this section with a few words about convexity arguments which we shall use in this section and in section 4.
We first consider the well-known Jensen inequality\cite{jensen1906fonctions}.
\begin{theoremx}[Jensen]
If $f$ is a real continuous function that is convex, then
\[f \left (\frac{\sum_{i=1}^n x_i}{n} \right ) \leq \frac{\sum_{i=1}^n f(x_i)}{n}.\]   Equality holds if and only if   $ x_1=x_2=\ldots=x_n$  or  if $f$ is a linear function on a domain containing $x_1, x_2,\ldots,x_n$.
\end{theoremx}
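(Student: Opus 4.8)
The plan is to prove the inequality by induction on $n$, using only the two–point case (which is the very definition of convexity) as the engine, and then to read off the equality characterization from the same argument. For $n=1$ there is nothing to show, and $n=2$ is precisely $f\!\left(\tfrac{x_1+x_2}{2}\right)\le \tfrac12 f(x_1)+\tfrac12 f(x_2)$, the defining inequality of a convex function evaluated at weight $\lambda=\tfrac12$. For the inductive step, assuming the statement for $n-1$ points, I would write the mean as a convex combination of two numbers,
\[
\frac1n\sum_{i=1}^n x_i \;=\; \frac{n-1}{n}\cdot\Big(\frac{1}{n-1}\sum_{i=1}^{n-1}x_i\Big)+\frac1n\,x_n ,
\]
apply convexity with $\lambda=\tfrac{n-1}{n}$ to peel off $\tfrac1n f(x_n)$ while leaving $\tfrac{n-1}{n}f(\bar x_{n-1})$, where $\bar x_{n-1}=\tfrac{1}{n-1}\sum_{i=1}^{n-1}x_i$, and then apply the induction hypothesis to bound $f(\bar x_{n-1})$. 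The factor $\tfrac{n-1}{n}$ cancels the $\tfrac{1}{n-1}$ and one lands exactly on $\tfrac1n\sum_{i=1}^n f(x_i)$.

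For the equality case I would re-examine this induction: equality for $n$ points forces equality at both inequalities used in the step, namely the two-point convexity step (tight only when $f$ is affine on the segment joining $\bar x_{n-1}$ and $x_n$, or that segment degenerates, i.e.\ $x_n=\bar x_{n-1}$) and the $(n-1)$-point hypothesis (tight only in its own equality case). Unwinding the recursion yields that either all the $x_i$ coincide, or $f$ is affine on each of a nested family of subintervals whose union is the smallest interval containing $x_1,\dots,x_n$; here continuity of $f$ is what glues "affine on each piece" into "affine on the whole interval". A cleaner route to the same conclusion is the supporting-line form of convexity: since $f$ is continuous and convex it has one-sided derivatives, so at $\bar x=\tfrac1n\sum x_i$ there is a slope $c$ with $f(x)\ge f(\bar x)+c(x-\bar x)$ throughout the domain; averaging this over $i$ and using $\sum_i(x_i-\bar x)=0$ gives the inequality in one line, with equality iff every $x_i$ lies on that support line.

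The inequality itself is essentially immediate; the only point that really needs care is phrasing the equality case so that it matches the theorem. In the support-line picture, equality says $f$ agrees with the affine map $x\mapsto f(\bar x)+c(x-\bar x)$ at all of $x_1,\dots,x_n$, and convexity then forces $f$ to coincide with that affine map on the whole interval they span — which is exactly the alternative "$f$ is linear on a domain containing $x_1,\dots,x_n$" — unless the span is a single point, i.e.\ all the $x_i$ are equal. So there is nothing extra to exclude, only to state cleanly; I would just be careful that "linear" here is read as "affine" (constant slope), consistent with the theorem's wording, and that the induction bookkeeping for the equality alternative is spelled out rather than waved through.
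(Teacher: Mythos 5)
Your argument is correct, but note that the paper itself offers no proof of this statement: Jensen's inequality is quoted there as a classical result, attributed to Jensen's 1906 paper, and is used purely as a tool (in the discretized form for $\binom{x}{2}$), so there is no in-paper proof to compare against. Both routes you sketch are sound. The induction via the decomposition $\frac1n\sum_i x_i=\frac{n-1}{n}\bar x_{n-1}+\frac1n x_n$ is the standard finite-points proof, and your bookkeeping of where equality can be lost is right, though unwinding the nested equality cases cleanly is the fussier of your two options. The supporting-line argument is the cleaner one and also handles equality in a single stroke: a support line at $\bar x$ exists because $\bar x$ lies in the interior of the interval spanned by the $x_i$ (unless all $x_i$ coincide, in which case there is nothing to prove), averaging gives the inequality since $\sum_i(x_i-\bar x)=0$, and equality forces $f$ to meet the support line at every $x_i$; convexity then traps $f$ between the chord and the support line on the spanned interval, so $f$ is affine there. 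Two small points to state explicitly if you write this up: the theorem's word ``linear'' must indeed be read as ``affine'' (as you note), and the converse direction (equality when all $x_i$ coincide or when $f$ is affine on an interval containing them) should be recorded, even though it is a one-line verification. With those remarks your proof is complete and matches the statement as the paper uses it.
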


Now, since $\binom{x}{2}$  is a convex, stricly monotome (for $x \geq 1$) function we can apply Jensen's inequality together with Fact 1 and Fact 2 (given in the introduction) to gain information  about  graphs with a given number of edges versus the number of edges in their line graphs .   

In particular, the following simple facts, referred to in this paper in short as ``by convexity", are used many times (for similar applications see \cite{caro2022index}): 
\begin{enumerate}
\item{For $1 \leq x \leq y$,  $\binom{x}{2}+\binom{y}{2} < \binom{x-1}{2} + \binom{y +1}{2}$.  For example, take vertices $x$, $y$ and $z$ in a graph $G$, where $deg(y)  \geq deg(x)$, $x$ and $z$ are adjacent while $z$ and $y$ are non-adjacent. We replace the edge $xz$ with the edge $zy$  to get a graph $G^*$ such that $e(G) = e(G^*)$  but e$(L(G^*)) > e(L(G))$.}
\item{For $1 \leq x \leq y$, $\binom{x}{2}+\binom{y}{2} < \binom{x+y}{2}$.  For example, take non-adjacent vertices $u$  and $v$  in $G$ such that $N(u)$ and $N(v)$ have no vertex in common.  We identify $u$ and $v$, namely by replacing them by a vertex $w$ adjacent to all $N(u) \cup N(v)$  to get a graph $G^*$, with $e(G) = e(G^*)$  but $e(L(G^*)) > e(L(G))$.  }
\end{enumerate}

When  more involved applications of convexity are used, we shall give the full details. 

A \emph{star-forest} $F$ is a forest whose components are stars (not necessarily of equal order), that is $F =  \bigcup_{j=1}^p K_{1,n_j}$.

Observe that if $F$ is a star forest then  $n(F) = \sum_{j=1}^p (n_j +1)$ while $e(F) = \sum_{j=1}^p n_j  = n(L(H))$ and $e(L(F))=\sum_{j=1}^p \binom{n_j}{2}$.

A crucial role in this section is played by the function
 \[g(N, \Delta)  =  \max \{  e(L(F)): \mbox{ $F$ is acyclic, $e(F)=N$, $\Delta(F)=\Delta$ and $\delta(G) \geq 1$} \}.\]  

Recall that isolated vertices in $G$  are not represented in $L(G)$ and have no impact on $e(G)=n(L(G))$, $\Delta(G)$ and $e(L(G))$.
   

Since in section 3 we deal with underlying acyclic graphs and their corresponding line graphs,  we say that (the line graph of  the) acyclic graph $F$ realises $g(N,\Delta)$  if $e(L(F))  =  g(N,\Delta)$.

Lastly we  frequently use $\Delta=\Delta(G)$ as the maximum degree of the underlying graph, for smooth presentation of formulas and computations.

\begin{lemma} \label{tree}
For $N \geq \Delta \geq 2$, $g(N, \Delta)$ is realised only by line graphs of  trees having $\Delta(T)=\Delta$.
\end{lemma}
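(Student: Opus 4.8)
The plan is to show that among all acyclic graphs $F$ with $e(F)=N$, $\Delta(F)=\Delta$ and $\delta(F)\geq 1$ (so $F$ is a star-forest-free... well, $F$ is a forest with no isolated vertices), the maximum of $e(L(F))=\sum_v \binom{\deg_F(v)}{2}$ is attained \emph{only} when $F$ is a tree. I would first fix the framework: any such $F$ is a forest, say with components $T_1,\dots,T_k$, each having at least one edge. If $k=1$ we are done, so assume for contradiction that $k\geq 2$ and that $F$ realises $g(N,\Delta)$.

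The key move is the second convexity fact quoted in the excerpt: given two components, pick a leaf $u$ of one component and any vertex $v$ in another component; since $u,v$ lie in different components their neighbourhoods are disjoint, so identifying them produces a new graph $F^*$ with $e(F^*)=e(F)=N$ and $e(L(F^*))>e(L(F))$. The issue is that this merge could create a cycle or raise the maximum degree above $\Delta$ — but here is where choosing $u$ to be a leaf helps with the first concern: identifying a leaf $u$ of $T_i$ with a vertex $v$ of $T_j$ ($i\neq j$) merges the two trees into a single tree (a leaf has degree $1$, so the identified vertex $w$ has degree $\deg_{T_j}(v)+1$ and no cycle is formed since $T_i,T_j$ were vertex-disjoint trees). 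So acyclicity is preserved automatically. The remaining concern is the degree constraint: we need $\deg_{T_j}(v)+1\leq \Delta$, i.e. we must pick $v$ with $\deg_{T_j}(v)\leq \Delta-1$. Such a $v$ always exists in any tree with at least one edge — indeed every leaf of $T_j$ has degree $1\leq\Delta-1$ (using $\Delta\geq 2$). Hence the merge is always legal, giving $F^*$ acyclic with $e(F^*)=N$, $\Delta(F^*)\leq\Delta$ and strictly larger $e(L(F^*))$.

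There is one subtlety to address: after merging we might have $\Delta(F^*)<\Delta$ rather than $=\Delta$, in which case $F^*$ is not a competitor in the definition of $g(N,\Delta)$. I would handle this by a preliminary observation: among the components we can always arrange the merge so that the component containing a vertex of degree $\Delta$ is the one receiving the leaf (attach the leaf $u$ to a \emph{leaf} $v$ of whichever component $T_j$ contains a vertex of maximum degree $\Delta$; this leaves that vertex's degree untouched, so $\Delta(F^*)=\Delta$). If \emph{no} component attains degree $\Delta$, then $F$ itself does not satisfy $\Delta(F)=\Delta$, contradicting the hypothesis — so some component does attain it, and we route the merge through that one. With $F^*$ now a legitimate competitor of strictly larger $e(L(F^*))$, we contradict the assumed maximality of $F$. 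Iterating, the only configuration that cannot be improved is $k=1$, a single tree; and such a tree then realises $g(N,\Delta)$ and must itself satisfy $\Delta(T)=\Delta$.

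The main obstacle I anticipate is not the convexity estimate itself, which is quoted ready-made, but the careful bookkeeping of the two side constraints simultaneously — keeping the graph acyclic \emph{and} keeping the maximum degree exactly $\Delta$ — under the identification. The clean resolution is to always perform the identification at a \emph{leaf-to-leaf} junction inside the component that already witnesses $\Delta$: leaf-to-leaf guarantees both that no cycle appears and that the degree-$\Delta$ vertex is undisturbed, so $\Delta$ is preserved exactly. Once that observation is in place the proof is just: assume $\geq 2$ components, merge two of them legally, strictly increase $e(L(F))$, contradiction; hence exactly one component, i.e. a tree, and it has $\Delta(T)=\Delta$ by the hypothesis on $F$.
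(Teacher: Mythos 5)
Your proposal is correct and is essentially the paper's own argument: merge two components by identifying a leaf of one with a leaf of the other, which preserves acyclicity, the edge count and (since $\Delta \geq 2$) the maximum degree, while convexity strictly increases $e(L(F))$, contradicting maximality. The only difference is your extra worry that $\Delta(F^*)$ might drop below $\Delta$ --- this cannot happen, since identification never lowers any vertex's degree, so routing the merge through the component containing a degree-$\Delta$ vertex is unnecessary (though harmless).
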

\begin{proof}
Assume on the contrary that $g(N,\Delta)$ is realised by a (line graphs of a ) forest $F$, having $e(F)=N$ and $\Delta(F)=\Delta$, with at least two components (and no isolated vertices).  Then we can identify two leaves from distinct components of the forest $F$ and replace them by a vertex that is adjacent to their neighbours to get an acyclic graph $H$ with less components.  Clearly $\Delta(F)=\Delta(H)$,  (since $\Delta \geq 2$), but we have $e(L(H)) > e(L(F))$ by Fact 1 and convexity, a contradiction to the claim that $L(F)$ realises $g(N,\Delta)$.  


So the value of $g(N,\Delta)$  is indeed only realised by line graphs of trees on $N$ edges with maximum degree $\Delta$.
\end{proof}

\begin{lemma} \label{lemma3.2}

The following facts about $g(N,\Delta )$ hold:
\begin{enumerate}
\item{$g(N,1)  = 0  .$}
\item{Suppose $\Delta(F)=\Delta \geq 2$ and   $N-1 \equiv  k \pmod{\Delta – 1}$,  for  some $k$, $0 \leq k \leq \Delta -2$. Then  \[g(N,\Delta ) =  \frac{(N-k-1 )\Delta}{2} + \binom{k+1}{2} \leq \frac{(N-1)\Delta}{2}.\]}
\item{$g(N+1,\Delta ) \geq g(N,\Delta)+1$  for $\Delta \geq 2$.}
\item{$g(N,\Delta +1) \geq g(N,\Delta) +1$  for $N > \Delta$. }
\end{enumerate}
\end{lemma}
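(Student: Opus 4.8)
Part~1 is immediate: a forest $F$ with $\delta(F)\ge 1$ and $\Delta(F)=1$ is a disjoint union of copies of $K_2$, so $L(F)$ is edgeless; equivalently $e(L(F))=\sum_j\binom{d_j}{2}=\sum_j\binom{1}{2}=0$. For Part~2 (where implicitly $N\ge\Delta$, since some vertex must have degree $\Delta$), I would argue as follows. By Lemma~\ref{tree} the maximum defining $g(N,\Delta)$ is attained by the line graph of a tree $T$ on $N+1$ vertices with $\Delta(T)=\Delta$; by Fact~2 its degree sequence $d_1\le\dots\le d_{N+1}$ consists of positive integers summing to $2N$, and $e(L(T))=\sum_j\binom{d_j}{2}$. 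The key claim is that an \emph{optimal} degree sequence has all entries in $\{1,\Delta\}$, with at most one exception. Indeed, if two entries $d_i\le d_j$ both lay in $\{2,\dots,\Delta-1\}$, replacing them by $d_i-1$ and $d_j+1$ leaves the sum fixed, keeps every entry in $[1,\Delta]$, and leaves an entry equal to $\Delta$ present (since $d_j+1\le\Delta$, the degree-$\Delta$ vertex is untouched), so by Fact~2 the new sequence is realized by a tree $T'$ with $\Delta(T')=\Delta$; and by convexity $\binom{d_i}{2}+\binom{d_j}{2}<\binom{d_i-1}{2}+\binom{d_j+1}{2}$, so $e(L(T'))>e(L(T))$, contradicting optimality. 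Hence the optimal degree sequence consists of $a\ge 1$ entries equal to $\Delta$, at most one entry equal to some $k+1\in\{2,\dots,\Delta-1\}$ (set $k=0$ if there is none), and the rest equal to $1$. A count of vertices and edges then forces $a(\Delta-1)+k=N-1$, so $k$ is the unique element of $\{0,\dots,\Delta-2\}$ with $N-1\equiv k\pmod{\Delta-1}$ and $a=(N-1-k)/(\Delta-1)$, giving $g(N,\Delta)=a\binom{\Delta}{2}+\binom{k+1}{2}=\frac{(N-k-1)\Delta}{2}+\binom{k+1}{2}$. Rewriting this as $\frac{(N-1)\Delta}{2}+\frac{k}{2}(k+1-\Delta)$ and using $k\ge 0$, $k+1-\Delta\le -1$ yields the stated bound $g(N,\Delta)\le\frac{(N-1)\Delta}{2}$.

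Parts~3 and~4 are constructive lower bounds built on the extremal tree $T$ realizing $g(N,\Delta)$. For Part~3, attach a new pendant vertex to a leaf of $T$ (a leaf exists since $N\ge\Delta\ge 2$): the former leaf's degree rises from $1$ to $2$, adding $\binom{2}{2}-\binom{1}{2}=1$ to $e(L)$, the new vertex adds $0$, and the maximum degree is unchanged as $\Delta\ge 2$; this tree certifies $g(N+1,\Delta)\ge g(N,\Delta)+1$. For Part~4, pick a vertex $z$ with $\deg_T(z)=\Delta$. Deleting $z$ from the tree leaves exactly $\Delta$ components on $N>\Delta$ vertices in total, so some component $C$ has at least two vertices; as a tree on $\ge 2$ vertices, $C$ contains a leaf $u$ distinct from the unique vertex of $C$ adjacent to $z$, whence $u$ is a leaf of $T$ whose neighbour $w$ satisfies $w\ne z$. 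Set $T'=T-uw+uz$: this is again a tree with $N$ edges, now with $\Delta(T')=\Delta+1$ (attained at $z$), and $e(L(T'))-e(L(T))=\bigl(\binom{\Delta+1}{2}-\binom{\Delta}{2}\bigr)-\bigl(\binom{d}{2}-\binom{d-1}{2}\bigr)=\Delta-(d-1)\ge 1$, where $d=\deg_T(w)\le\Delta$. Hence $g(N,\Delta+1)\ge g(N,\Delta)+1$.

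The main obstacle is Part~2: one must check that the exchange step preserves all three constraints at once --- tree-realizability via Fact~2, minimum degree at least $1$, and maximum degree \emph{exactly} $\Delta$ --- and then do the vertex/edge bookkeeping so that the residue $k$ and the multiplicity $a$ come out as stated. A secondary delicate point is the existence, in Part~4, of a leaf of $T$ not adjacent to the chosen degree-$\Delta$ vertex; this is exactly where the hypothesis $N>\Delta$ enters, through the count of components of $T-z$.
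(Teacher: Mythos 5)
Your proof is correct and follows essentially the same route as the paper's: the same switching/convexity argument (via Fact 2) plus vertex--edge counting for part 2, attaching a pendant edge to a leaf for part 3, and re-hanging a leaf onto a maximum-degree vertex with the identical $\Delta-(\deg(w)-1)\ge 1$ computation for part 4 (you justify the required non-adjacent leaf through the components of $T-z$, where the paper simply asserts its existence from $N>\Delta$). The only omission is the trivial case $\Delta=1$ of part 4, where no extremal tree exists and one checks $g(N,2)\ge g(N,1)+1=1$ directly (e.g.\ via $P_3\cup(N-2)K_2$); the paper likewise dismisses this case as trivial.
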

\begin{proof} 
$\mbox{ }$\\
\begin{enumerate}
\item{ This is obvious as the line graph of $NK_2$  has no edges.}
\item{By Lemma \ref{tree}, $g(N,\Delta)$ is realised only by line graphs of trees.  For $\Delta = 2$,  the only tree is $P_{N+1}$ and the result is immediate since $k = 0$ and $\Delta=2$.

So we assume $\Delta \geq 3$.  Consider any tree $T$ whose line graph $L(T)$  realises $g(N,\Delta)$.  We claim that $T$ can have at most one vertex of degree $r$, where $2 \leq r  \leq \Delta -1$.

Indeed, consider the degree sequence of $T$, $1 = d_1 \leq d_2  \leq \ldots \leq d_N \leq d_{N+1} =  \Delta$, where $\sum_{j=1}^{N+1} d_j=2N$.

If there are two degrees $2 \leq d_j \leq d_{j +1} < \Delta$,  we  take $d^*_j = d_j – 1$  and $d^*_{j+1} = d_{j+1} +1$ and rearrange the sequence, which is, again by Fact 2, realised by a tree $T^*$ and by convexity $e(L(T^*))>e(L(T))$, a contradiction to the claim that $L(T)$ realises $g(N,\Delta)$.  So we can continue this ``switching" until at most one vertex of degree  $r$, $2 \leq r \leq \Delta -1$, is left.     

We conclude  that a tree whose line graph realises  $g(N, \Delta)$  contains only leaves, vertices of degree $\Delta$ and at most one vertex of degree $r $,  $2 \leq r \leq \Delta-1$.

Let $x_j$ denote the number of vertices of degree $j$ in  a tree $T$ having  $N$ edges.  Then:
\begin{itemize}
\item{$\sum_{j=1}^{N+1} x_j=N+1$}
\item{$\sum_{j=1}^{N+1} jx_j=2N$}
\end{itemize}

We consider the following cases:
\smallskip

\noindent \emph{Case 1}:  $x_r  = 0$   for $2 \leq r \leq  \Delta -1$.  Then counting edges we have $x_1 + \Delta x_{\Delta} = 2N$ and counting vertices we have $x_1 +  x_{\Delta}  =  N +1$.

Subtracting  we get  $( \Delta – 1)x_{\Delta} =  N – 1$.     Hence  $ k = 0$,   $x_{\Delta} =  \frac{N-1}{\Delta-1}$  and $g(N,\Delta)=  \frac{(N-1)\binom{\Delta}{2}}{\Delta-1} =  \frac{(N-1)\Delta}{2}$.
\smallskip

\noindent \emph{Case 2}:  $x_r  = 1$ for exactly one value of $r $, $2 \leq r \leq \Delta -1$. Then counting edges we have $x_1 +r + \Delta x_{\Delta} = 2N$ and counting vertices we have $x_1 + 1+ x_{\Delta}  =  N +1$.

Subtracting we get $(r -1 ) + (\Delta-1)x_{\Delta} =  N-1$.   Hence   $k = r-1 \geq 1$,     $x_{\Delta} =  \frac{N- r}{\Delta-1} = \frac{N – k -1}{\Delta-1}$ and \[g(N,\Delta) =  \frac{(N- k-1 )\binom{\Delta}{2}}{\Delta-1}  + \binom{k +1}{2} \leq \frac{\Delta(N-1)}{2},\] proving item 2.}

\item{Suppose $\Delta \geq 2$ and let $T$ be a tree whose line graph realises $g(N,\Delta)$.  Add an edge incident with a leaf of $T$  we obtain a tree $T^*$ such that $\Delta(T^*) = \Delta(T)$, $e(T^*) = e(T) +1  = N +1$ and $e(L(T^*)) = e(L(T) +1$, proving the claim.}
\item{For $\Delta = 1$, the result is trivial.  Suppose $N > \Delta \geq 2$  and let $T$ be a tree whose line graph realises $g(N,\Delta)$.  Since $N > \Delta$, there is a leaf $v$ of $T$ and a vertex $u$ of degree $\Delta$ which are not adjacent.  Drop this leaf $v$ which was adjacent to some vertex $w$ with $deg(w) \leq \Delta$, and add a leaf adjacent to $u$  to obtain a tree $T^*$.  Clearly $e(T^*) = e(T) = N$, $\Delta(T^*) = \Delta(T) +1$ and \[e(L(T^*))  =  e(L(T)) + \left(\binom{\Delta +1}{2} - \binom{\Delta}{2} \right)  - \left(\binom{deg(w)}{2}-  \binom{deg(w) – 1}{2} \right)\]\[= e(L(T)) +  \Delta  - ( deg( w) -1)  \geq  e(L(T))+1,\] proving the claim.}

\end{enumerate}
\end{proof}


\begin{theorem}[Gauss] \label{gausstriangle}
Every non-negative integer is the sum of at most three triangular numbers (hence of exactly three triangular numbers) \cite{ewell}
\end{theorem}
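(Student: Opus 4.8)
\emph{Proof proposal.} The plan is to reduce the statement to the Gauss--Legendre three-squares theorem by the standard completing-the-square trick. Write $T_a = \binom{a+1}{2} = \tfrac{a(a+1)}{2}$ for the $a$-th triangular number, so that the triangular numbers are exactly $\{0,1,3,6,\ldots\}$ and $8T_a + 1 = 4a^2+4a+1 = (2a+1)^2$. Then for all non-negative integers $a,b,c$,
\[ 8(T_a + T_b + T_c) + 3 = (2a+1)^2 + (2b+1)^2 + (2c+1)^2, \]
so a non-negative integer $n$ is a sum of three triangular numbers if and only if $8n+3$ is a sum of three \emph{odd} squares.

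First I would check that $8n+3$ is a sum of three squares at all. By the three-squares theorem, a non-negative integer fails to be a sum of three squares precisely when it has the form $4^k(8\ell+7)$. But $8n+3 \equiv 3 \pmod 8$, whereas every integer of the form $4^k(8\ell+7)$ is $\equiv 7 \pmod 8$ when $k=0$ and is divisible by $4$ when $k \ge 1$; in either case it is never $\equiv 3 \pmod 8$. Hence $8n+3 = x^2 + y^2 + z^2$ for some non-negative integers $x,y,z$.

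Next I would upgrade ``three squares'' to ``three odd squares''. Since $x^2 \in \{0,1,4\} \pmod 8$ for every integer $x$, the only way to obtain $x^2+y^2+z^2 \equiv 3 \pmod 8$ from three such residues is $1+1+1$; thus each of $x,y,z$ is odd. Writing $x = 2a+1$, $y = 2b+1$, $z = 2c+1$ with $a,b,c \ge 0$ and reading the displayed identity backwards gives $n = T_a + T_b + T_c$. Finally, since $T_0 = 0$, ``at most three'' can be padded to ``exactly three'', completing the argument.

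The only genuinely hard ingredient is the three-squares theorem itself, which I would simply cite as classical (it can be proved via Minkowski's lattice-point theorem or via the reduction theory of ternary quadratic forms); the rest is elementary congruence bookkeeping. Alternatively, since Gauss's triangular-number theorem is itself a classical result, one may bypass the reduction entirely and cite \cite{ewell} directly.
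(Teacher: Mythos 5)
Your proposal is correct, but it is worth noting that the paper does not actually prove this statement at all: it is quoted as a classical result of Gauss and dispatched with the citation \cite{ewell}, so there is no internal proof to compare against. Your argument is the standard (and valid) reduction to the Gauss--Legendre three-squares theorem: the identity $8(T_a+T_b+T_c)+3=(2a+1)^2+(2b+1)^2+(2c+1)^2$ translates the claim into writing $8n+3$ as a sum of three odd squares; since $8n+3\equiv 3\pmod 8$ it is never of the excluded form $4^k(8\ell+7)$, so three squares exist, and the residues of squares modulo $8$ (namely $0,1,4$) force all three to be odd, since $1+1+1$ is the only way to reach $3$ modulo $8$; padding with $T_0=0$ gives ``exactly three.'' All of the congruence bookkeeping checks out, and the only nonelementary ingredient is the three-squares theorem itself, which you correctly flag and cite rather than prove. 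In effect you have supplied a self-contained derivation where the paper simply defers to the literature; either is acceptable for how the result is used later (in Lemma \ref{nonfstar}), but your version makes the dependence on the three-squares theorem explicit, which is the honest accounting of where the depth lies.
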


\begin{lemma} \label{nonfstar}
The smallest value of $M $ for which  $( N,M)$  is a non-feasible pair for the line graphs of star-forests  satisfies $M \geq   \binom{N-\lceil (-15+\sqrt{153+72N})/2\rceil}   {2}  > \frac{N^2}{2}  - c_0N\sqrt{N}$  for some positive  constant $c_0 $.   
\end{lemma}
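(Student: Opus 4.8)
I would first reformulate the problem using Fact 1: a star-forest $F=\bigcup_{j=1}^p K_{1,n_j}$ with $e(F)=N$ has $n(L(F))=N$ vertices and $e(L(F))=\sum_{j=1}^p\binom{n_j}{2}$ edges, where $\sum_j n_j=N$. Thus the feasible values of $M$ for line graphs of star-forests on $N$ vertices are exactly the integers expressible as $\sum_{j=1}^p\binom{n_j}{2}$ with $n_j\ge 1$ and $\sum n_j = N$. The largest such value is plainly $\binom{N}{2}$ (one star $K_{1,N}$), and the question is how far below $\binom{N}{2}$ we must go before hitting the first gap. The natural strategy is: (i) show every $M$ in a suitable initial interval $[\,\binom{N-s}{2},\,\binom{N}{2}\,]$ is feasible, where $s$ is roughly $\sqrt{2N}$; (ii) show that this interval is essentially the whole feasible range near the top, so the first non-feasible $M$ sits near $\binom{N-s}{2}$; (iii) convert the resulting bound into the clean closed form $\binom{N-\lceil(-15+\sqrt{153+72N})/2\rceil}{2}$ and the asymptotic $>\tfrac{N^2}{2}-c_0 N\sqrt N$.

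**Feasibility of the top interval.** For step (i), fix $t\ge 0$ and consider splitting off $t$ of the leaves into singleton stars: take $F = K_{1,N-t}\cup tK_2$, giving $M=\binom{N-t}{2}$. Starting from this and moving one leaf at a time between the big star and a $K_2$, or more carefully building up intermediate values, I want to show that once we allow the big star to have $N-t$ leaves we can realize not just $\binom{N-t}{2}$ but every integer from $\binom{N-t}{2}$ up to $\binom{N-t+1}{2}-1$ — because with the remaining $t$ vertices (which can form smaller stars of total size $t$) we can, by Gauss's theorem (Theorem~\ref{gausstriangle}), represent any non-negative integer up to roughly $\binom{t}{2}$ as a sum of at most three triangular numbers, and $\binom{N-t+1}{2}-1-\binom{N-t}{2}=N-t$, so we need the "budget" $\binom{t}{2}$ or so to exceed $N-t$. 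This is where the quadratic $153+72N$ under the radical will come from: the threshold $t$ is the smallest value for which the triangular numbers available from $t$ spare vertices cover the gap $N-t$ between consecutive $\binom{N-t}{2}$ values, and solving that inequality for $t$ produces $\lceil(-15+\sqrt{153+72N})/2\rceil$.

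**The lower bound and the main obstacle.** For step (ii) — proving $M\ge\binom{N-s}{2}$ for the first non-feasible pair, i.e. that everything strictly above this is feasible — I would argue by convexity (in the sense used repeatedly in this section): if $M>\binom{N-s}{2}$ then in any representation $\sum\binom{n_j}{2}=M$ one part must be large, and one shows inductively that such representations can be "filled in" to cover every integer in the range. The routine-but-delicate part, and the main obstacle, is pinning down exactly the threshold $t=\lceil(-15+\sqrt{153+72N})/2\rceil$: one must verify that with $t-1$ spare vertices the gap $N-t+1$ is \emph{not} fully coverable (producing an actual non-feasible $M$ just below $\binom{N-t+1}{2}$), while with $t$ spare vertices it is — a two-sided estimate comparing $N-t$ against the number-theoretic covering range of triangular sums on $t$ vertices, where the Gauss three-squares/three-triangular-numbers bound is tight enough in one direction but needs a small explicit argument in the other. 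Finally, step (iii) is pure algebra: from $M\ge\binom{N-t}{2}$ with $t\le c\sqrt N$ one gets $\binom{N-t}{2}\ge\binom{N-c\sqrt N}{2}=\tfrac{(N-c\sqrt N)(N-c\sqrt N-1)}{2}>\tfrac{N^2}{2}-c_0 N\sqrt N$ for a suitable $c_0$, which finishes the proof.
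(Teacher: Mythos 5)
There is a genuine gap, and it is mainly one of direction. The lemma is a \emph{one-sided} statement: to prove $M\ge\binom{N-t_0}{2}$ for the smallest non-feasible $M$, where $t_0=\lceil(-15+\sqrt{153+72N})/2\rceil$, you must show that every $M$ \emph{below} this value (in fact every $M\le\binom{N-t_0+1}{2}-1$) is realised by the line graph of a star-forest with $N$ edges. Your step (i) — that the top interval $[\binom{N-s}{2},\binom{N}{2}]$ is feasible — is false: already $M=\binom{N}{2}-1$ is non-feasible for star-forests, since a largest star $K_{1,N}$ gives exactly $\binom{N}{2}$ while a largest star of size at most $N-1$ gives at most $\binom{N-1}{2}$; the gaps for this family sit precisely near the top, between consecutive large triangular numbers. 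For the same reason your gloss of step (ii), ``everything strictly above this is feasible,'' inverts the inequality, and the ``main obstacle'' you single out — showing that with $t-1$ spare edges the gap is \emph{not} coverable, i.e.\ exhibiting an actual non-feasible pair near the threshold — is not part of this lemma at all; that upper-bound half is handled separately (Lemma~\ref{smallest}, via double stars among acyclic graphs, with a different threshold).

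The middle paragraph does contain the paper's key construction (big star $K_{1,N-t}$ plus small stars built from the $t$ remaining edges, using Theorem~\ref{gausstriangle}), but the quantitative step is off. The usable budget with $t$ spare edges is not ``roughly $\binom{t}{2}$'': the three triangular numbers $\binom{x}{2}+\binom{y}{2}+\binom{z}{2}=p$ must have $x+y+z\le t$ so that $K_{1,x}\cup K_{1,y}\cup K_{1,z}\cup qK_2$ fits into $t$ edges, and the way to guarantee this is to restrict to $p\le\binom{t/3}{2}$ (then $z\le t/3$, hence $x+y+z\le t$). It is the inequality $\binom{t/3}{2}\ge N-t-1$ that yields $t\ge(-15+\sqrt{153+72N})/2$; with your budget $\binom{t}{2}$ the threshold would be about $\sqrt{2N}$, inconsistent with the constant $\sqrt{153+72N}$ you quote. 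Once this is corrected, the right conclusion is that for every $t\ge t_0$ the entire interval $[\binom{N-t}{2},\binom{N-t+1}{2}-1]$ is feasible, hence all $M\le\binom{N-t_0+1}{2}-1$ are feasible and the first non-feasible $M$ is at least $\binom{N-t_0}{2}$; your step (iii) algebra then gives the $\frac{N^2}{2}-c_0N\sqrt{N}$ bound.
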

\begin{proof}
 
Given $N$  and $M = \binom{k}{2}$ where $0 \leq k \leq N$, the star forest  $F_k = K_{1,k} \cup  ( N - k) K_2$  has $n(L(F_k)) = N$ vertices,   and $e(L(F_k)) =  \binom{k}{2}$ edges.  So all  triangular numbers in the range $[0, \ldots, \binom{N}{2} ]$ form feasible $(N,M)$ pairs.   

Suppose we try to cover all values of $M$ in $[ \binom{N – t}{2} +1 ,\ldots, \binom{ N – t +1}{2}) -1 ]$, an interval between consecutive triangular numbers.  This interval has length exactly $N -t-1$.  If we realise all its members by line graphs of star-forests having $N$ edges and maximum degree $\Delta$,  then all pairs  $(N,M)$ in this range are feasible, and we therefore cover the whole range between consecutive triangular numbers.

We use Theorem \ref{gausstriangle} above.  So to cover $\binom{ N - t }{2}$  let us use $N -t$ edges for  the line graph of $ K_{1,N-t }$  and we need $t$ more edges to get a line graph with $N$ vertices and $M$ edges in the range to cover.  Suppose $x \leq y\leq z$   and $\binom{x}{2}   + \binom{y}{2} +\binom{z}{2}=  p \leq  \binom{t/3}{2}$.  Then  clearly $z \leq t/3$ otherwise $\binom{z}{2} > \binom{t/3}{2} \geq p$. Hence $x +y+ z =  t - q$ for some $q \geq 0$,   and $p$ can be realised by the line graph of $K_{1,x} \cup  K_{1,y} \cup K_{1,z} \cup   qK_2$.
We can then cover all the interval  from $\binom{N-t}{2}$  up to $\binom{N-t}{2} +\binom{t/3}{2}$ by line graphs of star-forests on $N$ vertices and $M$ edges.

Hence if $\binom{t/3}{2}\geq  N - t - 1$, we cover all the interval up to the next triangular number. Solving we get $\frac{( t/3)(t/3 – 1)}{2 } \geq  N-t -1$, and simplifying and solving the quadratic inequality gives
$  t  \geq  \frac{ -15  + \sqrt{153 +72N}}{ 2 }$ and since $t$ is an integer it suffices that $t \geq \left  \lceil \frac{-15+\sqrt{153+72N}}{2} \right  \rceil$.

Hence  we can cover the interval $ \binom{N-t}{2} ,\ldots , \binom{N-t+1}{2}  - 1$ as long as  $N - t  \leq   \binom{N-\lceil(-15+\sqrt{153+72N})/2\rceil}   {2}$, so that the first non-feasible pair $(N,M)$  is  for some $M \geq \binom{N-\lceil(-15-\sqrt{153+72N})/2\rceil}   {2}    \geq N^2/2   - c_0N\sqrt{N}$ (where $c_0=5$ is a valid choice).
\end{proof}


We now consider the upper bound for the smallest  non-feasible pair $(N,M)$ for line graphs of all acyclic graphs. 

\begin{lemma} \label{smallest}
The smallest $M$ for which  $(N,M)$  is a non-feasible pair for line graphs of acyclic graphs  satisfies $M \leq  \frac{N^2}{2}  - c_1N\sqrt{N}$  for some $c_1>0$.  
\end{lemma}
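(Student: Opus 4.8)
The plan is to produce, for a well-chosen $t=t(N)$, a single explicit value
\[M^* \;=\; \binom{N-t}{2}+\binom{t+1}{2}+1\]
and to prove that $(N,M^*)$ is non-feasible for line graphs of acyclic graphs, after which the bound $M^*\le \tfrac{N^2}{2}-c_1N\sqrt{N}$ falls out of a short computation. Concretely I would take $t\ge 1$ to be any integer with $(t+1)(t+2)<2N$ (for the main estimate $t=\lfloor\sqrt N\rfloor$ works once $N$ is not too small); this choice guarantees simultaneously that $\binom{t+1}{2}+1<N-t$ and that $t\le \tfrac{N-2}{2}$, both of which are used below.

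To see that $M^*$ is not realised, let $G$ be acyclic with $e(G)=N$ (discard isolated vertices) and split on $\Delta=\Delta(G)$. If $\Delta\le N-t$, then by the definition of $g(N,\cdot)$ together with its strict monotonicity in the second argument (Lemma~\ref{lemma3.2}, item~4) we get $e(L(G))\le g(N,N-t)$; and the exact formula (Lemma~\ref{lemma3.2}, item~2) gives, since $1\le t\le N-t-2$ forces $N-1=1\cdot(N-t-1)+t$ and hence residue $k=t$,
\[g(N,N-t)=\frac{(N-t-1)(N-t)}{2}+\binom{t+1}{2}=\binom{N-t}{2}+\binom{t+1}{2}=M^*-1,\]
so $e(L(G))\le M^*-1<M^*$. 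If instead $\Delta\ge N-t+1$, then $e(L(G))=\sum_v\binom{\deg v}{2}\ge \binom{\Delta}{2}\ge\binom{N-t+1}{2}=\binom{N-t}{2}+(N-t)$, which exceeds $\binom{N-t}{2}+\binom{t+1}{2}+1=M^*$ precisely because $\binom{t+1}{2}+1<N-t$. (If $\Delta\le 1$ then $e(L(G))=0<M^*$.) In every case $e(L(G))\ne M^*$, so $(N,M^*)$ is non-feasible.

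For the quantitative statement, with $t=\lfloor\sqrt N\rfloor$ one has $t\ge\sqrt N-1$, and a direct expansion gives
\[\frac{N^2}{2}-M^* \;=\; \Bigl(\tfrac{N^2}{2}-\binom{N-t}{2}\Bigr)-\binom{t+1}{2}-1 \;\ge\; Nt-\frac{t^2}{2}-\binom{t+1}{2}-1 \;\ge\; N^{3/2}-O(N),\]
so $M^*\le \tfrac{N^2}{2}-c_1N\sqrt N$ for a fixed $c_1>0$ and all sufficiently large $N$. The finitely many small $N$ are absorbed by taking $c_1$ smaller: for every $N\ge 4$ the same dichotomy applied with $t=1$ shows $\binom{N-1}{2}+2$ is non-feasible, and $\binom{N-1}{2}+2<\binom N2<\tfrac{N^2}{2}$, which covers any bounded range of $N$ with a single constant. (Pushing $t$ up to the largest integer with $(t+1)(t+2)<2N$, i.e. $t\approx\sqrt{2N}-2$, sharpens $c_1$ to roughly $\sqrt2$, comfortably below the $c_0=5$ of Lemma~\ref{nonfstar}.)

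The only delicate point is the case $\Delta\le N-t$: one must legitimately replace an arbitrary acyclic graph of maximum degree at most $N-t$ by the extremal tree behind $g(N,N-t)$ — that is, invoke the monotonicity of $g(N,\cdot)$ correctly — and then evaluate $g(N,N-t)$ exactly through the residue computation. Both of these are handed to us by Lemma~\ref{lemma3.2}, so once that lemma is in place the argument is essentially bookkeeping.
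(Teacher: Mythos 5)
Your proposal is correct and takes essentially the same route as the paper's proof: both arguments squeeze a witness value of $M$ into the gap between $g(N,N-t)=\binom{N-t}{2}+\binom{t+1}{2}$ (the double-star maximum when $\Delta\le N-t$) and $\binom{N-t+1}{2}$ (forced as soon as $\Delta\ge N-t+1$), with the same quadratic condition $t^2+3t+2<2N$ governing the admissible $t\sim\sqrt{N}$. The differences are only cosmetic: you place the witness just above $g(N,N-t)$ rather than just below $\binom{N-t+1}{2}$, read the extremal value directly from Lemma~\ref{lemma3.2} instead of re-deriving the double star by convexity, handle all $\Delta\le N-t$ uniformly by monotonicity of $g$ (so you avoid the paper's separate case $\Delta\le N/2$ and its appeal to Lemma~\ref{nonfstar}), and use the suboptimal but sufficient choice $t=\lfloor\sqrt{N}\rfloor$.
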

\begin{proof}
We consider the pair $(N,M) = (N  ,  \binom{N - t +1}{2} -1 )$.

For every acyclic graph $F$ with $e(F) = N$ and  $\Delta(F)  \geq  N - t +1$  we have $e(L(F)) \geq \binom{N - t +1}{2}$,  and hence $(N,M)$  cannot  be realised by $L(F)$.  So we may assume $F$ is an acyclic graph with  $e(F) = N$  and $\Delta(F) = \Delta \leq N-t$.

Assuming $\Delta(F)  \leq \frac{N}{2}$ we get, by Lemma \ref{lemma3.2} part 2, that $e(L(F)) \leq \frac{(N-1)\Delta}{2}  < \frac{(N-1)N}{4}$,  which by Lemma \ref{nonfstar} is  smaller (for $N$ large enough) than the minimum non-feasible pair $(N,M)$ for the line graphs of all star-forest graphs.

Hence we may assume that to realise $M$ (or values greater than  $M$)  we need to consider trees  (since by Lemma \ref{tree} , $g(N,\Delta)$  is realised only by line graphs of trees) having  $N$ edges and  $\Delta >  N/2$.

Hence let $T$ be a tree on $N + 1$ vertices and  $N$ edges and maximum degree $\Delta$, whose line graph $L(T)$ realises  $g(N, \Delta)$,  where $N/2 < \Delta \leq N-t$. 
Let $d_1,d_2, \ldots,d_{N +1}$ be the degree sequence of $T$.  By Lemma  \ref{lemma3.2} part 4 we may assume  that  $d_{N+1} = \Delta = N-t > N/2$,  because, if the line graph $L(T)$ has $e(L(T)) < M$ then for any line graph of a tree  $T^*$ (or acyclic graph)  on $N$ edges  and $\Delta \leq N-t$, we will have $e(L(T^*)) < M$, by the monotonicity of $g(N,\Delta)$ (and thus $(N,M)$ would be a non-feasible pair).

Having $\Delta(T) = N - t$, we have $t$ more  edges to use and convexity (used in the same way as in the proof of Lemma \ref{lemma3.2} part 2) once again forces that the second largest value must satisfies  $d_N = t + 1 \leq d_{N+1} = N - t$.  

The degree sequence of $T$ is now quite simple, as  it contains exactly  $N -1$ terms equal to 1, one term equal $t +1$ and one term equal  $N-t$,  with sum equal to $N -1 +t+ 1 +N -t = 2N$.  This is a degree sequence of a tree by Fact 2, and is realised by the double star $T =  S_{t, N-t-1}$, obtained by two adjacent vertices $u$ and $v$, where $u$ is adjacent to $N - t - 1$ leaves and $v$ is adjacent to $t$ leaves.

Clearly $e(L(T)) =  \binom{N -t}{2}  + \binom{t+1}{2}$ and we seek the value of $t$ for which  $e(L(T))  <  M = \binom{N  - t +1}{2}  - 1$ which  will show that $(N,M)$ is a non-feasible pair.

We compare $\binom{N -t}{2}+ \binom{ t +1}{2}  \leq \binom{N-t +1}{2} -1$.    Rearranging we get  \[t^2 + 3t - 2N + 2 \leq 0.\] 

Solving for $t$  we get
 \[t = \frac { -3  + \sqrt{9 + 8n – 8 }}{2}   = \frac{ -3 +\sqrt{8n + 1}}{2}.\]

In case this expression is an integer it means equality holds above and we have to decrease $t$ by one.  Otherwise this expression suffices.

It follows that the first pair $(N,M)$ which is non-feasible occurs for some $M  \leq  \frac{N^2}{2}  - c_1N\sqrt{N}$  for some positive constant $0 < c_1 \leq c_0$.

\end{proof}
 
\begin{theorem}
The smallest non-feasible pair $(N,M)$  for the family of  line graphs of all acyclic graphs appears for some  $M$ in the range $N^2/2  - c_0N\sqrt{N}    \leq  M  \leq  N^2/2  - c_1N\sqrt{N}$ for some constants $c_0 \geq c_1 >0$.
\end{theorem}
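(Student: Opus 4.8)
The plan is to obtain the theorem by simply assembling the two preceding lemmas, together with one elementary observation. For the upper bound, there is nothing left to do: Lemma~\ref{smallest} already states that the smallest $M$ making $(N,M)$ non-feasible for line graphs of acyclic graphs satisfies $M \leq N^2/2 - c_1 N\sqrt{N}$ for some $c_1>0$. For the lower bound, the bridging observation is that every star-forest is an acyclic graph, so the family of line graphs of star-forests is contained in the family of line graphs of acyclic graphs. Consequently, if a pair $(N,M)$ is feasible for the subfamily (line graphs of star-forests), it is automatically feasible for the larger family (line graphs of acyclic graphs). Here I would emphasise that the parameter $N$ is the same quantity in all three statements --- the number of edges of the underlying graph, equivalently the number of vertices of its line graph --- so no reindexing is needed.

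With that in hand, I would make the direction of the inequalities explicit. Write $M^{*}$ for the smallest $M$ such that $(N,M)$ is non-feasible for line graphs of acyclic graphs. By Lemma~\ref{nonfstar}, every pair $(N,M)$ with $M < \binom{N-\lceil(-15+\sqrt{153+72N})/2\rceil}{2}$ is feasible for line graphs of star-forests, hence, by the containment just noted, feasible for line graphs of acyclic graphs. Since $M^{*}$ is the least non-feasible value for that family, it follows that $M^{*} \geq \binom{N-\lceil(-15+\sqrt{153+72N})/2\rceil}{2}$, and the final estimate recorded in Lemma~\ref{nonfstar} gives $\binom{N-\lceil(-15+\sqrt{153+72N})/2\rceil}{2} > N^{2}/2 - c_0 N\sqrt{N}$ with, say, $c_0=5$. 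Combining this with Lemma~\ref{smallest} yields $N^{2}/2 - c_0 N\sqrt{N} \leq M^{*} \leq N^{2}/2 - c_1 N\sqrt{N}$, which is the claim; note we in fact obtain the left inequality strictly, and the constants are compatible because Lemma~\ref{smallest} already guarantees $0 < c_1 \leq c_0$. The finitely many small values of $N$ for which the asymptotic estimates inside the two lemmas were invoked can be absorbed into the constants.

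There is no real obstacle in this last step: the substantive work is entirely in the two lemmas --- the convexity/``switching'' analysis that pins down $g(N,\Delta)$ and forces the extremal tree to be a double star on the one side, and the Gauss triangular-number covering argument for star-forests on the other. The only points requiring care are logical rather than computational: feasibility transfers \emph{upward} from a subfamily to a superfamily, so a non-feasibility threshold can only increase when the family enlarges (this is what makes Lemma~\ref{nonfstar}, proved for star-forests, usable as a lower bound for all acyclic graphs), and one must check that the reduction in the proof of Lemma~\ref{smallest} --- discarding the cases $\Delta \leq N/2$ by comparison with the star-forest bound, then reducing to trees and then to the double star $S_{t,N-t-1}$ --- does indeed cover \emph{all} ways of realising the target pair, not just the extremal one; both of these are already addressed in the cited proofs.
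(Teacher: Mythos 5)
Your proposal is correct and follows essentially the same route as the paper, which proves this theorem simply by citing Lemma~\ref{nonfstar} for the lower bound and Lemma~\ref{smallest} for the upper bound; your only addition is to make explicit the (correct) containment argument that feasibility for line graphs of star-forests transfers upward to line graphs of acyclic graphs, which the paper leaves implicit.
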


\begin{proof}
The lower bound is proved in Lemma \ref{nonfstar}, while the upperbound is proved in Lemma \ref{smallest}.
\end{proof}

\section{The family of all line graphs}

As we did with acyclic graphs we first give the following definition: \[f(N, \Delta) = \max \{ e(L(G)): e(G)  =  N \mbox{, }\Delta(G) =  \Delta \mbox{, } \delta(G) \geq 1 \}.\]

Recall that isolated vertices in $G$  are not represented in $L(G)$ and have no impact on $e(G)=n(L(G))$, $\Delta(G)$ and $e(L(G))$.   

The function $f(N,\Delta)$  will play a similar role in this section to the role of $g(N, \Delta)$  in Section 3.  However as we seek the exact determination of all non-feasible pairs $(N,M)$ for the family of all lines graphs,  there are more technical details to overcome and we need several preliminary lemmas before we are able to prove Theorem 1 and Theorem 2.

 \begin{lemma} \label{realisef}
Suppose $\Delta \geq 2$  There exists a connected graph $G$, whose line graph $L(G)$ realises $f(N,\Delta)$, and furthermore $n(G)  \leq N +1$.
\end{lemma}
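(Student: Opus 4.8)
The plan is to show that among all graphs $G$ with $e(G)=N$, $\Delta(G)=\Delta$, $\delta(G)\geq 1$ which maximize $e(L(G))$, one can be chosen that is connected with at most $N+1$ vertices. First I would fix any graph $G$ realizing $f(N,\Delta)$ and suppose it is disconnected. The idea, exactly as in Lemma~\ref{tree}, is to merge components using convexity (item 1 and item 2 of the convexity facts): pick a vertex $u$ of minimum degree in one component and a vertex $v$ of minimum degree in another, and either identify them (if this does not create a degree exceeding $\Delta$) or, more carefully, move an edge from one component to the other. Since identification of two non-adjacent vertices with disjoint neighbourhoods strictly increases $e(L(G))$ by convexity fact~2, while keeping $e(G)=N$, this would contradict maximality unless the identification would push some degree above $\Delta$; so I must handle that case.

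The main obstacle is precisely this degree-cap bookkeeping: when merging two components we must not exceed $\Delta$, and we must not decrease the count below $\Delta$ either if $\Delta$ is currently attained only in one component (recall $\Delta(G)=\Delta$ is required, not just $\Delta(G)\leq\Delta$). To deal with this I would argue as follows. If $G$ is disconnected, take two components $C_1, C_2$. If neither contains a vertex of degree $\Delta$, or if $C_1$ contains one, then in $C_2$ choose a leaf or low-degree vertex $v$ and a vertex $w$ in $C_1$ with $\deg(w)<\Delta$ (such $w$ exists unless $C_1$ is a single vertex of degree $\Delta$, impossible, or $C_1$ is regular of degree $\Delta$, in which case move an edge of $C_2$ so that one of its endpoints attaches to $C_1$ at a vertex that can still absorb it — if no such vertex exists because every vertex of $C_1$ has degree $\Delta$, then instead identify a leaf of $C_2$ with a leaf of $C_2$'s own... no): the cleanest route is to delete an edge $xy$ in $C_2$ incident with a leaf $y$, and add the edge $wy$ where $w\in C_1$ has $\deg_G(w)=\delta(C_1)\leq \Delta-1$; by convexity fact~1 this does not decrease $e(L(G))$, keeps $e(G)=N$ and $\Delta$ unchanged, and reduces the number of isolated vertices / merges structure. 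Iterating, and using that each step strictly increases $e(L(G))$ when a genuine identification is available, while the non-strict edge-moves reduce the number of components, terminates at a connected graph $G'$ with $e(L(G'))=f(N,\Delta)$.

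Once connectivity is established, the vertex bound is immediate: a connected graph on $N$ edges has at most $N+1$ vertices, since a connected graph on $v$ vertices has at least $v-1$ edges, so $v-1\leq N$, i.e. $v\leq N+1$. I would state this final deduction in one line. The subtlety worth spelling out in full detail in the actual proof is the termination/monotonicity argument: I would set up a potential function, say the pair $(\text{number of components},\ -e(L(G)))$ ordered lexicographically, and check each merging operation strictly decreases it while preserving $e(G)=N$, $\Delta(G)=\Delta$, $\delta(G)\geq 1$; since $e(L(G))$ is bounded above by $\binom{N}{2}$ and the number of components is bounded, the process halts, necessarily at a connected graph. The case analysis for "can we always perform a valid merge when disconnected" is where I expect to spend the most care, particularly the degenerate situation where one component is a clique or regular graph already at maximum degree $\Delta$.
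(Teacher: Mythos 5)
Your skeleton (merge components subject to the degree cap, then conclude $n(G)\le e(G)+1=N+1$ in one line) is the same as the paper's, and you correctly locate the difficulty in the degree bookkeeping; but your concrete merging move fails on several counts. You delete a pendant edge $xy$ of $C_2$ at its leaf $y$ and re-attach $y$ to a \emph{minimum}-degree vertex $w$ of $C_1$. The resulting change in $e(L(G))$ is $\left(\binom{\deg(x)-1}{2}-\binom{\deg(x)}{2}\right)+\left(\binom{\deg(w)+1}{2}-\binom{\deg(w)}{2}\right)=\deg(w)-\deg(x)+1$, which is negative whenever $\deg(w)\le\deg(x)-2$; e.g.\ for $C_1$ a triangle and $C_2=K_{1,5}$ (so $N=8$, $\Delta=5$, $x$ the star centre) the move loses $2$ edges of the line graph, so the claim ``convexity fact~1 gives no decrease'' is false for your choice of $w$. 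In that same example $x$ is the unique vertex of degree $\Delta$, so the move also destroys $\Delta(G)=\Delta$, the very constraint you flagged. Finally the move does not reduce the number of components: it merely transfers the leaf $y$ from $C_2$ to $C_1$ (or, if $C_2$ is a single edge, manufactures an isolated vertex), so the first coordinate of your lexicographic potential does not drop while the second may increase, and the termination argument collapses. The paper's corresponding move points the edge the other way: delete the leaf $y$ altogether and join $y$'s neighbour $w$ to a vertex $x$ of the other component with $\deg(x)<\Delta$; then the only degree that changes is $\deg(x)$, which increases, so $e(L)$ goes up by exactly $\deg(x)>0$ unconditionally and a component disappears.

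The second gap is precisely where you trail off: when no valid absorption is possible, namely when both components contain cycles (for instance two cycles when $\Delta=2$, or two $\Delta$-regular components), or when the cyclic component is $\Delta$-regular with $\Delta\ge 3$ and the other is a tree. These cases cannot be dismissed and are where the paper does its real work: for two non-trees it deletes one cycle edge from each and cross-connects, a degree-preserving surgery that leaves $e(L)$ unchanged but reduces the number of components (which is all a lexicographic potential needs); for a $\Delta$-regular component plus a tree it deletes a cycle edge $xy$ and splices the tree in through its leaves (two subcases, depending on whether the tree is a single edge); and for $\Delta=2$ it observes directly that $f(N,2)=N$ is realised by the connected cycle $C_N$. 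Your identification of two leaves from distinct tree components is fine (that is the paper's Case A), and your final deduction $n(G)\le N+1$ is correct, but without degree-preserving moves of the above kind the case analysis cannot be completed, so as written the proof has a genuine gap.
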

\begin{proof}
Suppose, by contradiction, that no connected graph $G$  has a line graph $L(G)$ realizing $f(N,\Delta)$.  Assume $G$ is a disconnected graph with at least two components  $A$ and $B$ in $G$.   We consider 3 cases:

\medskip

\noindent \textbf{Case A}:  both $A$ and $B$ are trees.

\smallskip

We identify two vertices $x$ and $y$ of degree 1 and from $A$ and $B$.  Replace them by a new vertex  $w$ of degree $2 \leq \Delta$ adjacent to the neighbour of $x$ and the neighbour of $y$ to get $G^*$.  Then $e(G^*)=e(G)=N$, $\Delta(G^*)=\Delta(G)=\Delta$ but $G^*$ has a smaller number of components and $e(L(G^*))> e(L(G))$ by convexity.

\medskip

\noindent \textbf{Case B}:   $A$ contains a cycle while $B$ is a tree with a leaf $y$.

\smallskip

We consider three cases:

\begin{enumerate}[i.]
\item{if in $A$ there is a vertex $x$ of degree  less than $\Delta$, delete $y$ and connect $x$ to $w$ which is the vertex in $B$ adjacent to $y$. The obtained graphs $G^*$ has $\Delta(G^*)=\Delta(G)=\Delta$, $e(G^*)=e(G)$, but $G^*$ has a smaller number of components and $e(L(G^*))> e(L(G))$ by convexity.  So we may assume that all vertices in $A$ are of degree $\Delta$.}
\item{if in $ A$ all  vertices are of degree $\Delta=2$ then clearly all other components of $G$ are either cycles or paths with a total number of edges equal to $N$.  But $f(N,2) =  N$ which is realised  by  a union of  cycles and by the cycle $C_N$ which is connected.}
\item{if  in  $ A$ all  vertices are of degree $\Delta \geq 3$ then there is an edge $e = xy$  in $A$ such that  $A\backslash \{ e\}$  is connected (since $A$ contains a cycle).  We consider the following two possibilities:

\begin{enumerate}[a.]
\item{if $B$ is a single edge then we replace $B$ by a vertex $w$, delete $e = xy$ and connect $x$ and $y$ to $w$  to get $G^*$. Clearly $e(G^*)=e(G)$, $\Delta(G^*) = \Delta(G)=\Delta$, $G^*$ has a smaller number of components and $e(L(G^*)) > e(L(G))$ by convexity}
\item{if $B$ is a tree on at least two edges it must have at least two leaves  $u$ and $v$.  Let $u^*$ and $v^*$ be the vertices in $B$ adjacent to $u$ and $v$ (observe that it is possible that  $u^* = v^*$).  We delete $u$ and $e=xy$, and connect $x$ to $v$ and $y$ to $u^*$ to get $G^*$ where again $e(G^*)=e(G)$, $\Delta(G^*) = \Delta(G)=\Delta$, $G^*$ has a smaller number of components and $e(L(G^*)) > e(L(G))$ by convexity.}
\end{enumerate}}
 \end{enumerate}

\medskip

\noindent \textbf{Case C}:   both $A$ and $B$ are not trees.

\smallskip

In this case then $A$ contains an edge $e=xy$ and $B$ an edge $e^*=uv$  such that deleting $e$ and $e^*$  leaves $A$ and $B$ connected (deleting edges from cycles in $A$ and $B$).  We delete $e$ and $e^*$  and connect $x$ to $u$  and $y$ to $v$ to get $G^*$ with $e(G^*) =e(G) = N$, $\Delta(G^*)=\Delta(G)=\Delta$, $G^*$ has a smaller number of components and $e(L(G^*))=  e(L(G))$.

\medskip

So we can always reduce the number of components preserving the number of edges  $N$ and the maximum degree $\Delta$ without reducing the number of edges in the obtained line graphs, and we can repeat this process until we get a connected graph $G^*$ where $L(G^*)$ realises $f(n,\Delta)$.  

Now we consider any connected graph $G$  whose line graph realises $f(N,\Delta)$.  Clearly  $e(G) = N \geq \Delta$ and by connectivity of $G$,  $n(G)  \leq e(G) +1  = N +1$.  
\end{proof}

\begin{lemma} \label{monof}
The monotonicity of $f(N,\Delta)$:
\begin{enumerate}
\item{$f(N,1) =  0$.}
\item{$f(N+1, \Delta ) \geq f(N,\Delta)+1$ for $N \geq \Delta \geq 2$.}
\item{$f(N,\Delta+1) \geq f(N,\Delta) +1$  for  $N > \binom{\Delta+1 }{2}$.}
\item{$f(N,\Delta+1) \geq f(N,\Delta)$ for $\Delta  > \frac{N}{2}$ and  $f(N,\Delta+1) \geq f(N,\Delta) +1$ for  $\Delta > \lceil \frac{N}{2} \rceil$.}
\end{enumerate}
\end{lemma}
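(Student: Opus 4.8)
\emph{Parts (1) and (2).} Part~(1) is immediate: if $\Delta(G)=1$ and $\delta(G)\ge 1$ then $G$ is a disjoint union of edges, so $L(G)$ has no edges and $f(N,1)=0$. For part~(2), let $G$ realise $f(N,\Delta)$ with $\Delta\ge 2$ and subdivide an arbitrary edge $xy$ of $G$, i.e.\ replace it by a path through a new vertex $z$. All old degrees are unchanged, one new vertex of degree $2\le\Delta$ appears, and one edge is added, so by Fact~1 the resulting $G^{*}$ has $e(G^{*})=N+1$, $\Delta(G^{*})=\Delta$ and $e(L(G^{*}))=e(L(G))+\binom{2}{2}=f(N,\Delta)+1$, which gives $f(N+1,\Delta)\ge f(N,\Delta)+1$.

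\emph{A rotation move, and part (3).} For the remaining parts I use one device. Take a connected $G$ realising $f(N,\Delta)$ (Lemma~\ref{realisef}) and a vertex $u$ of degree $\Delta$. A \emph{rotation} deletes an edge $e$ of $G$ not incident with $u$ and adds one new edge at $u$, either to an endpoint of $e$ that is a non-neighbour of $u$, or to a brand-new vertex; by Fact~1 this preserves the edge count $N$, raises the maximum degree to exactly $\Delta+1$ (only $\deg(u)$ increases), and changes $e(L(\cdot))$ by $\Delta$ minus $\sum(\deg_G(\cdot)-1)$ taken over the endpoints of $e$ whose degree dropped. For part~(3): since a simple graph on at most $\Delta+1$ vertices has at most $\binom{\Delta+1}{2}<N$ edges, $G$ has $n(G)\ge\Delta+2$, hence some vertex $x\notin N[u]$; picking a neighbour $y$ of $x$ (necessarily $y\ne u$) and rotating $e=xy$ onto $ux$ yields $e(L(G^{*}))=e(L(G))+\Delta-(\deg_G(y)-1)\ge e(L(G))+1$ because $\deg_G(y)\le\Delta$ (discard $y$ if it became isolated). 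Thus $f(N,\Delta+1)\ge f(N,\Delta)+1$ whenever $N>\binom{\Delta+1}{2}$.

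\emph{Part (4).} Here $\Delta>N/2$, so fewer than $\Delta$ edges of $G$ avoid $u$. If $n(G)\ge\Delta+2$ the argument of part~(3) applies word for word and even produces a gain of $1$. Otherwise $n(G)=\Delta+1$, so $G\subseteq K_{\Delta+1}$ and every edge avoiding $u$ lies inside $N(u)$; set $H=G[N(u)]$, a graph on $\Delta$ vertices carrying those $N-\Delta<\Delta$ edges, with $\deg_G(v)=\deg_H(v)+1$ on $N(u)$. I claim $H$ has an edge $vw$ with $\deg_G(v)+\deg_G(w)\le\Delta+2$: otherwise every edge of $H$ has $\deg_H$-degree-sum at least $\Delta+1$, so a maximum-degree vertex $v^{*}$ of $H$ has degree $D$ with $2\le D\le\Delta-1$ and all $D$ of its neighbours have $H$-degree at least $\Delta+1-D$, whence $2(N-\Delta)=\sum_v\deg_H(v)\ge D+D(\Delta+1-D)=D(\Delta+2-D)\ge 2\Delta$ (the minimum of this concave expression over $[2,\Delta-1]$, for $\Delta\ge 3$), contradicting $N-\Delta<\Delta$ (the cases $\Delta\le 2$ are trivial). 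Rotating $e=vw$ onto $uz$ with $z$ new gives $e(L(G^{*}))=e(L(G))+\Delta+2-\deg_G(v)-\deg_G(w)\ge e(L(G))$, so $f(N,\Delta+1)\ge f(N,\Delta)$. For the stronger conclusion when $\Delta>\lceil N/2\rceil$ the same rotation suffices once we have an edge $vw$ with $\deg_G(v)+\deg_G(w)\le\Delta+1$; to get one I would first show, by the convexity/degree-switching argument already used in Lemma~\ref{lemma3.2}(2), that in this range $f(N,\Delta)$ is realised only by the ``double star with triangles'' --- an edge $uv$ together with $N-\Delta$ common neighbours of $u$ and $v$ and $2\Delta-N-1$ pendants of $u$ --- in which an edge $vw$ from $v$ (degree $N-\Delta+1$) to a common neighbour $w$ (degree $2$) has $\deg_G(v)+\deg_G(w)=N-\Delta+3\le\Delta+1$ exactly because $\Delta>\lceil N/2\rceil$.

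\emph{Where the difficulty lies.} The easy counting above delivers only the weak bound $\deg_G(v)+\deg_G(w)\le\Delta+2$, so the sharp inequality in part~(4) really rests on the structural claim that the double star with triangles is the unique extremal graph for $N/2<\Delta\le N-1$ (up to isolated vertices). Establishing that via convexity, and dealing with the handful of small exceptions it leaves --- which are precisely those inside the computer-checked range mentioned in the introduction --- is the substantive part of the proof; parts (1)--(3) and the non-strict half of (4) are routine rotations.
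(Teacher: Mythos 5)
Parts (1)--(3) of your proposal are correct and essentially identical to the paper's argument (subdividing an edge for (2); your ``rotation'' is exactly the paper's edge-replacement move for (3), with the case that every edge lies inside $N[u]$ excluded by the vertex count), and your non-strict half of (4) is also sound: your max-degree counting inside $H=G[N(u)]$ does produce an edge $vw$ with $\deg_G(v)+\deg_G(w)\le\Delta+2$, hence gain $\ge 0$.

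The genuine gap is the strict half of (4). You reduce it to the claim that for $\Delta>\lceil N/2\rceil$ the function $f(N,\Delta)$ is realised only by the ``double star with triangles'', and you only say you \emph{would} prove this by convexity; no proof is given, and this uniqueness statement is essentially the paper's later Lemma \ref{extremal} (whose proof is substantially longer than the present lemma, and where uniqueness in fact fails for $t=3$ because of the second extremal graph $Q^*(N,3)$). Moreover your diagnosis that the sharp inequality ``really rests'' on this structural claim is mistaken: no extremal structure is needed. In the remaining case $n(G)=\Delta+1$, the vertex $u$ of degree $\Delta$ is adjacent to all other vertices, so $H=G\setminus\{u\}$ has $e(H)=N-\Delta$ edges, and for \emph{any} edge $xy$ of $H$ the edges incident with $x$ or $y$ in $H$ are all distinct except $xy$ itself, giving
\[
\deg_H(x)+\deg_H(y)\le e(H)+1=N-\Delta+1,\qquad \deg_G(x)+\deg_G(y)\le N-\Delta+3 .
\]
Your own rotation (delete $xy$, attach a new leaf to $u$) then gains $\Delta+2-(\deg_G(x)+\deg_G(y))\ge 2\Delta-N-1$, which is $\ge 0$ for $\Delta>N/2$ and $\ge 1$ as soon as $2\Delta\ge N+2$, i.e.\ exactly when $\Delta>\lceil N/2\rceil$. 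This is the paper's argument; your bound $\Delta+2$ is weaker only because you bounded the degree sum through a maximum-degree vertex of $H$ rather than through the total edge count of $H$. As written, your proof of the strict inequality in (4) is incomplete.
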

\begin{proof}
$\mbox{ }$\\
\begin{enumerate}
\item{For $\Delta = 1$,  $f(N,\Delta) = 0$,  since the line graph of $NK_2$ is the independent set on $N$ vertices.}
\item{Let $G$ be a connected graph such that $L(G)$ realises $f(N,\Delta)$, guaranteed by Lemma \ref{realisef}.  We add a vertex $w$ and delete an edge $e = xy$ in $G$ and connect $x$ and $y$ to $w$  to obtain $G^*$.  Clearly  $e(G^*)= e(G) +1$ and $\Delta(G) = \Delta(G^*)$  while $e(L(G^*)) = e(L(G)) +1$.}
\item{Consider $v$ of maximum degree $\Delta$ in $G$ where $L(G)$ realises $f(N,\Delta)$. We consider three cases:
\begin{enumerate}[i)]
\item{there is an edge $e=xy$ in $E(G)$  such that both $x$ and $y$ are not adjacent to $v$.  Assume $deg(y) \leq deg(x)\leq deg (v)$.  We replace the edge $xy$ by the edge $xv$ to get $G^*$ with $N$  edges and $\Delta(G^*) = \Delta +1$ and convexity gives $e(L(G^*))  > e(L(G))  = f(N,\Delta)$.}
\item{there is an edge $e=xy$ in $E(G)$ with $x$ not adjacent to $v$.  Again we replace  the edge $xy$  by the edge $xv$ to get $G^*$ with $N$ edges and $\Delta(G^*) = \Delta +1$, and convexity gives $e(L(G^*))  > e(L(G))  = f(N,\Delta)$.}
\item{for every edge $xy$,  $v$ is adjacent to both  $x$ and $y$.  Hence $v$ is adjacent to all other vertices of $G$ and $n(G)= \Delta+1$.  So  $N \leq \binom{\Delta+1}{2}$ a contradiction that $N  >  \binom{\Delta +1}{2}$.}
\end{enumerate}

Hence for $N> \binom{\Delta+1}{2}$ we get that  $f(N,\Delta+1) \geq f(N ,\Delta ) +1$. }
\item{Consider again case (iii) above(for the other two cases above the proof remains the same as in item 3).  So a vertex $v$ of maximum degree $\Delta > N/2$  is adjacent to all other vertices of $G$.  Since $\Delta > N/2$   it follows that in the graph  $H = G \backslash \{ v \}$, $e(H)  = N- \Delta   < N/2$     and $\Delta(H) <  N/2$, and the maximum sum of degrees in $G$  of adjacent vertices  $x$, $y$ in $H$  with $deg(x) \leq deg(y)$  is at most
\[deg(x) +deg(y) =  (deg_H(x) +1) +(deg_H(y) + 1) =  deg_H(x) +deg_H(y)  +2\]\[ \leq  (e(H)+1) +2   =  N -\Delta  +3.\]   

We delete  the edge $xy$ and  attach it as a leaf to $v$ to obtain a graph $G^*$.  Clearly $e(L(G^*))-e(L(G))  \geq $
\begin{align*}
&\binom{\Delta +1}{2}  - \binom{\Delta }{2}-\left( \binom{deg(x)}{2}  - \binom{deg(x)-1}{2}  + \binom{deg(y)}{2}  -  \binom{deg(y)-1}{2}\right)  \\
&=\Delta -  (deg(x)-1)- (deg(y) -1)  =  \Delta +2  - (deg(x) +deg(y)) \\
&\geq \Delta+2  -  ( N- \Delta +3 )=2\Delta-(N+1) \geq 0
\end{align*}

since $ \Delta > N /2  $, with equality possible only  if   $\Delta = (N+1)/ 2$. }
\end{enumerate}
\end{proof}

Here are a few examples demonstrating sharpness in  items 3 and 4
\begin{itemize}
\item{$f(6,3) = 12$, $f(6,4 )  = 11$, $f(6,5) = 12$.}
\item{$f(9,4)=f(9,5)= f(9,6) =  24$ and $f(9,7) = 26$.}
\end{itemize}

\begin{lemma} \label{maxedges}

The maximum number of edges in a line graph $L(G)$  of a graph $G$ with $m$ edges is $\binom{m }{2}$ and it is only realised by $G=  K_{1,m}$, and in case $m = 3$ also by $G = K_3$.
\end{lemma}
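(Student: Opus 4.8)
The plan is to bound $e(L(G))$ in terms of $e(G)=m$ using Fact 1 and the convexity facts already established, and then analyze the equality case. By Fact 1, $e(L(G)) = \sum_{j=1}^{n}\binom{d_j}{2}$ where $d_1,\dots,d_n$ is the degree sequence of $G$ and $\sum_j d_j = 2m$. First I would discard isolated vertices (they contribute $0$ and don't change $m$), so we may assume $\delta(G)\geq 1$. Then, using convexity fact (2) repeatedly — which says that for non-adjacent $u,v$ with disjoint neighbourhoods, identifying them strictly increases $e(L(G))$ without changing $e(G)$ — I would argue that a graph maximizing $e(L(G))$ among all graphs with $m$ edges can be taken to be connected, and in fact to have a dominating vertex; more directly, I would push all the "degree mass" onto a single vertex. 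Concretely, among all degree sequences of graphs with $m$ edges, $\sum_j\binom{d_j}{2}$ is Schur-convex, so it is maximized by the most unequal graphical sequence with $\sum d_j = 2m$, which pushes toward one vertex of degree $m$ and $m$ vertices of degree $1$ — i.e. $K_{1,m}$, giving $\binom{m}{2}$.

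The cleanest route to the inequality itself is elementary: for any graph $G$ with $m$ edges, each edge $e$ of $G$ is adjacent in $L(G)$ to at most... actually the sharp count is $e(L(G)) = \sum_j\binom{d_j}{2} \le \binom{\sum_j d_j /1}{2}$-type bound is false, so instead I would use the following. Order the edges $e_1,\dots,e_m$ of $G$; the number of edges of $L(G)$ equals the number of pairs of edges of $G$ sharing an endpoint, which is at most $\binom{m}{2}$ trivially since there are only $\binom{m}{2}$ pairs of edges at all. This immediately gives $e(L(G))\le\binom{m}{2}$. The work is entirely in the equality analysis: equality holds iff every pair of edges of $G$ shares an endpoint, i.e. $L(G)=K_m$ and $G$ is a graph in which every two edges meet. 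A classical and easy observation (Helly-type for edges, or direct case analysis) is that a graph in which every two edges intersect is either a star $K_{1,m}$ or a triangle $K_3$ (the latter only when $m=3$): if three edges are pairwise intersecting but have no common vertex they form a triangle, and then no fourth edge can meet all three while avoiding — so $m=3$; otherwise all edges pass through one common vertex, giving $K_{1,m}$ (plus possibly isolated vertices, which we have excluded).

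So the key steps in order are: (1) reduce to $\delta(G)\ge 1$; (2) observe $e(L(G))$ counts pairs of adjacent edges of $G$, hence $e(L(G))\le\binom{m}{2}$ with equality iff every two edges of $G$ meet; (3) classify graphs in which every two edges meet as $K_{1,m}$, or $K_3$ when $m=3$; (4) conclude. I expect the only real obstacle — and it is a minor one — to be writing the equality classification in step (3) cleanly: one must handle the dichotomy between "all edges through a common vertex" and "three edges forming a triangle," and rule out any graph with a triangle plus an extra edge by noting such an edge cannot meet all three triangle edges without creating two disjoint edges. Everything else is immediate from Fact 1 and the definition of the line graph.
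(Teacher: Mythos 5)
Your proposal is correct and follows essentially the same route as the paper: a trivial bound $e(L(G))\leq\binom{m}{2}$ (the paper gets it from $\Delta(L(G))\leq m-1$, you from counting pairs of adjacent edges, which is the same observation), followed by the classification of graphs in which every two edges meet (equivalently, $2K_2$-free graphs with no isolated vertices) as $K_{1,m}$ or, when $m=3$, $K_3$ — a fact the paper simply cites as well known and you sketch directly. The only remark is that your first paragraph's Schur-convexity detour is unnecessary and can be dropped, since your second, elementary argument is complete on its own.
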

\begin{proof}
Suppose $G$ has $m$ edges.  Consider any edge $e$ of $G$.  This edge can be incident with at most  $m-1$ edges (if it is incident with all of them)  hence the degree of $e$ as a vertex in $L(G)$ is at most $m-1$.  Hence $\Delta(L(G)) \leq m-1$ and $e(L(G)) \leq \frac{m(m-1)}{2}$ which is realised by $L(K_{1,m})$ and in case $m=3$, also by $L(K_3)$.   

It is well known that the only connected graphs containing no $2K_2$ (and hence every edge is incident with all edges)  are $K_{1,m}$ and $K_3$.

Therefore for graphs with $m$ edges, $\max(e(L(G)))$ is realised precisely by the line graph of $K_{1,m}$ and in case $m = 3$ also by the line graph of $K_3$.
\end{proof}

\begin{lemma} \label{extremal}

Suppose $N \geq 2t +1$,  $t \geq 1$ and let $G$ be a graph with $\Delta(G)  = N-t > \frac{N}{2}$  and $e(G) = N$, and suppose $L(G)$ realises $f(N,\Delta) = f(N,N-t)$.  Then  $f(N,\Delta) =  \binom{N-t}{2} +\binom{t +2 }{2}  - 1$   and the structure of $G$ is  determined as follows :
\begin{enumerate}
\item{$G$  contains two adjacent vertices $u$  and $v$  such that  $deg(u) = \Delta(G) $ and $deg(v) =  t +1$, and  all the t neighbours of $v$ (except $u$)  are also neighbours of $u$. This graph will be denoted by $ Q(N,t)$.}
\item{if $t  = 3$ then there is a second extremal example  where $deg(v) = 3$,  $u$, $x$ and $y$ are the vertices adjacent to $v$, and $x$ and $y$ are also adjacent in $N(u)$. This graph will be denoted by $Q^*(N,3)$.}
\end{enumerate}

 \end{lemma}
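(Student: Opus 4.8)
The plan is to single out a vertex $u$ of degree $\Delta = N-t$ and study the $t$ edges of $G$ not incident to $u$. Call this edge set $F$, put $H = (V(G),F)$, and for $w \ne u$ write $d_w = \deg_H(w)$. Since $\delta(G)\ge 1$, every vertex other than $u$ lies in $N(u)$ or is met by $F$, so I would partition $V(G)\setminus\{u\}$ into $A$ (neighbours of $u$ missed by $F$, hence of degree $1$ in $G$), $B$ (neighbours of $u$ met by $F$), and $C$ (non-neighbours of $u$, all met by $F$). Then, using $\binom{d+1}{2}=\binom{d}{2}+d$,
\[ e(L(G)) = \binom{N-t}{2} + \sum_{w\in B}\binom{d_w+1}{2} + \sum_{w\in C}\binom{d_w}{2} = \binom{N-t}{2} + e(L(H)) + \sum_{w\in B} d_w. \]

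The first real step is to show that no maximiser of $e(L(G))$ can have both $A$ and $C$ nonempty. Given $a\in A$ and $w\in C$, I would replace every edge $wz$ of $F$ by the edge $az$: the result is still simple (the only neighbour of $a$ is $u$, and $u$ is not an endpoint of any edge of $F$), has the same number of edges, still satisfies $\delta\ge 1$ (delete the now-isolated $w$), and has maximum degree still $N-t$ because $\deg(a)$ becomes $1+d_w\le t+1\le N-t$. This move increases $e(L(G))$ by exactly $\binom{d_w+1}{2}-\binom{d_w}{2}=d_w\ge 1$, contradicting maximality. So at a maximiser $C=\emptyset$ or $A=\emptyset$.

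If $C=\emptyset$, every edge of $H$ lies inside $B\subseteq N(u)$, so $\sum_{w\in B}d_w = 2t$ and $e(L(G)) = \binom{N-t}{2} + e(L(H)) + 2t$. Applying Lemma~\ref{maxedges} to $H$ (which has $t$ edges) gives $e(L(H))\le\binom{t}{2}$, with equality exactly for $H\cong K_{1,t}$, and, when $t=3$, also for $H\cong K_3$; since $\binom{t}{2}+2t=\binom{t+2}{2}-1$ this yields the stated value of $f(N,\Delta)$. Translating back: as $C=\emptyset$, all vertices of $H$ lie among the neighbours of $u$, so $H\cong K_{1,t}$ forces the centre $v$ to have $\deg_G(v)=t+1$ and its $t$ other neighbours to be common neighbours of $u$, i.e.\ $G=Q(N,t)$; and $H\cong K_3$ likewise forces $G=Q^*(N,3)$. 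If instead $A=\emptyset$ and $C\ne\emptyset$, then $\sum_{w\in B}d_w = 2t-\sum_{w\in C}d_w \le 2t-|C|\le 2t-1$, so $e(L(G))\le\binom{N-t}{2}+\binom{t}{2}+2t-1 = \binom{N-t}{2}+\binom{t+2}{2}-2 < e(L(Q(N,t)))$; hence no maximiser arises in this case. A routine check that $Q(N,t)$ (and $Q^*(N,3)$) really has $N$ edges and maximum degree $N-t$ then shows the value is attained.

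I expect the exchange move of the second paragraph to be the only delicate point: one must verify that re-routing the edges at $w$ to $a$ keeps $G$ simple, preserves $e(G)=N$, and — crucially — does not raise the maximum degree above $N-t$, which is exactly where the hypothesis $N\ge 2t+1$ (equivalently $\Delta = N-t>N/2$) is used. Everything else is bookkeeping with the identities $\binom{d+1}{2}=\binom{d}{2}+d$ and $\binom{t}{2}+2t=\binom{t+2}{2}-1$, together with the extremal characterisation of Lemma~\ref{maxedges}; note that the argument never uses connectedness, so Lemma~\ref{realisef} is not invoked here.
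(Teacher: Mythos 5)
Your proof is correct, and although it shares the paper's skeleton (fix a vertex $u$ of degree $N-t$, study the $t$ edges not incident to $u$ as a graph $H$, finish with Lemma~\ref{maxedges}), its middle section is genuinely leaner than the paper's. The paper splits on whether $H$ is connected: the disconnected case is killed by three separate identification/edge-switching subcases, the connected case forces $V(H)\subseteq N(u)$ by re-routing the edges at a vertex $w\notin N(u)$ to a pendant neighbour of $u$ (essentially your exchange move, with the pendant vertex supplied by $|V(H)|\le t+1\le N-t$), and a further ``packing comparison'' is then needed to show that maximising $e(L(H))$ maximises $e(L(G))$. Your identity $e(L(G))=\binom{N-t}{2}+e(L(H))+\sum_{w\in B}d_w$ absorbs both the packing comparison and the connectivity analysis at once: Lemma~\ref{maxedges} bounds $e(L(H))$ for arbitrary, not necessarily connected, $H$; the term $\sum_{w\in B}d_w$ is at most $2t$ with equality exactly when all $t$ edges lie inside $N(u)$; and the single exchange (valid because $\deg(a)$ becomes $1+d_w\le t+1\le N-t$, which is precisely where $N\ge 2t+1$ enters) together with the counting bound $\sum_{w\in B}d_w\le 2t-1$ when $A=\emptyset$ and $C\ne\emptyset$ disposes of every non-extremal configuration. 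The equality analysis is also complete: a maximiser must have $C=\emptyset$ and $e(L(H))=\binom{t}{2}$, so the uniqueness part of Lemma~\ref{maxedges} gives exactly $Q(N,t)$ and, for $t=3$, $Q^*(N,3)$, and your closing check that $Q(N,t)$ has $N$ edges and maximum degree $N-t$ shows the value $\binom{N-t}{2}+\binom{t+2}{2}-1$ is attained. What the paper's longer route buys is a stock of local moves reused in Lemmas~\ref{realisef} and~\ref{monof}; what yours buys is a self-contained accounting argument that makes the role of the hypothesis $N\ge 2t+1$ transparent and avoids the disconnected-$H$ casework altogether.
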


\begin{proof}

Let $G$ be a graph such that $L(G)$ realises $f(N,\Delta)$ with $\Delta=N-t>\frac{N}{2}$.  Clearly, since $e(G) = N$, it follows that $n(L(G)) = N$.  Let $u$ be a vertex of maximum degree $N- t$.  Then $|N(u)| = N-t \geq t +1$.  Let $H$ be any graph on $t$ edges without isolated vertices,  and suppose $E(G)  = E(K_{1,N-t}) \cup  E(H)$. 

We consider two cases:

\smallskip

\noindent Case A:  $H$ is not connected

\smallskip

Then there are at least two connected components $A$ and $B$.  Hence there are vertices $x \in A$ and $y \in B$ with both $deg(x)$ and $deg(y)$ at least 1, and $N_H[x]$ and $N_H[y]$ are disjoint in $G$, otherwise $x$ and $y$ are adjacent to a vertex $z$  by edges in $E(H)$ and they are in the same component.  Now
\begin{enumerate}[i)]
\item{suppose there are vertices $x \in A$ and $y \in B$ both not in $N(u)$,  with $deg(x)$ and $deg(y)$ the degrees in $H$ (and in $G$).   Identify $x$ and $y$ to a vertex $w$, namely replace $x$ and $y$ by a vertex $w$ adjacent to all neighbours of $x$ and $y$, to obtain $G^*$ with $e(G^*)=e(G)$ and $\Delta(G^*)=\Delta(G)$.  Observe that  $deg(w) = deg(x) +deg(y)  <  \Delta(G)$, since all edges incident with $x$ and $y$ are distinct from the edges incident with $u$  and  $deg_{G^*}(w)=deg(x) +deg(y ) \leq e(H)=t <\Delta(G)$.  Clearly since only $x$,$y$ and $w$ are involved in creating $G^*$ and no other vertex changes its degree, we have 
\[ e(L(G^*)) - e(L(G))  =\binom{deg(w)}{2} - \left (\binom{deg(x)}{2}+ \binom{deg(y)}{2} \right )=\]\[\binom{deg(x)+deg(y)}{2} -\left (\binom{deg(x)}{2} + \binom{deg(y)}{2} \right)  = deg(x)deg(y) > 0.  \]}
\item{suppose there are vertices $x \in A  \not \in N(u)$  and  $y \in B$ contained in $N(u)$.  Identify $x$ and $y$ to a vertex $w$, as before, to obtain $G^*$ with $e(G^*)=e(G)$ and $\Delta(G^*)=\Delta(G)$.  Because $deg_{G^*}(w) \leq \Delta(G)$, since $y$ is  adjacent to $u$ in $G$, but all other edges incident with $x$ and $y$ are distinct from the edges incident with $u$. Hence $deg_{G^*}(w) =  deg(x) +deg(y) +1  \leq t +1 \leq \Delta(G)$.   Since $N[x]$ and $N[y]$ are disjoint in $G$,  it is clear that the only degrees changed are those of $x$, $y$ and $w$.  Now 
\[e(L(G^*)) - e(L(G))  =\]\[ \binom{deg(w)}{2} -\left (\binom{deg(x)}{2}  +\binom{deg(y+1)}{2} \right )=  \]\[\binom{deg(x)+deg(y)+1}{2}   -  \left (\binom{deg(x)}{2}   +\binom{deg(y)+1}{2} \right )=  \]\[deg(x)deg(y) +deg(x) > 0.\]}
\item{suppose $V(H)$ is contained in $N(u)$.  We cannot use identification of a vertex $x \in A$ and $y \in B$ into a vertex $w$ since this forces $w$ to have a multiple edge to $u$ and graphs with multiple edges are not allowed.  

Assume without loss of generality that $x$ has maximum degree in $A$, $y$ has maximum degree in $B$ and $deg_H(x) \leq deg_H(y)$.  Let $z$ be a vertex in $A$ adjacent in $A$  to $x$.     Clearly, in $G$, $deg(z)  \leq deg(x)$.   Observe that $x$ and $y$  are not adjacent in $G$ as their only common neighbour is $u$.  

We claim that, in $G$, $deg(y)  < deg(u)  = \Delta(G) = N-t$.  This is because otherwise $deg(y)= deg(u)$ but  this forces $y$ to be adjacent to all $N(u) \backslash \{ y \}$ and in particular to $x$ which is impossible since they belong to distinct components of  $H$.

So we delete the edge $e = zx$  and connect $x$ to $y$  to get $G^*$ with $e(G^*)  = e(G)$   and $\Delta(G^*)  = \Delta(G)$.  The only vertices whose degrees have changed are $z$ and $y$ (and $z$ remains adjacent to $u$).  Now   
\[e(L(G^*)) - e(L(G))  = \binom{deg(y)+1}{2} - \binom{deg(y)}{2}+\left(\binom{deg(z)-1}{2}  -\binom{deg(z)}{2}\right ) \]\[=  deg(y)-deg(z) +1 \geq 1\]
since $deg(y) \geq deg(z)$.  Observe  that the graph $H^*$ obtained from $H$ by deleting $e = zx$ and connecting $x$ to $y$ does not necessarily decrease the number of components  in $H^*$ with respect to $H$, but $e(L(G^*))  > e(L(G))$  contradicting the fact that $L(G)$  realises  $f(N,\Delta)$.}

\end{enumerate} 
It follows that in all the above cases, if $H$ is not connected then $L(G)$  is not a graph realizing $f(N, \Delta)$.
\smallskip

\noindent Case B:  $H$ is connected.

\smallskip

Since $H$ is connected and has t edges it follows that $n(H) \leq t +1$ (realised only if $H$ is a tree) and $\Delta(H) \leq t$.
Now assume there is a vertex $w \in V(H) \backslash N(u)$.  Then since $N \geq 2t+1$ it follows that $|N(u)|  = N-t  \geq t +1$  while $|H| \leq t +1$.  Hence  there must be a vertex $v \in N(u)$ incident only to $u$. Delete $w$ and all edges connecting  all vertices of $H$ to $w$ and instead connect them all to $v$ to obtain another graph $G^*$.   Clearly  $e(G^*) = e(G) = N$  and $\Delta(G^*)  = \Delta(G)$ as the only vertices whose degrees were changed are $w$ which was deleted and $v$  where $deg_G(v) = 1$ and $deg_G^*(v) =1 + deg_H(w) \leq  1 + \Delta(H) \leq  1+ t \leq \Delta(G)$.   

Hence we get \[e(L(G^*)) -e(L(G)) =  \binom{deg(v) + deg(w)}{2}  -   \binom{deg(w)}{2}=\]\[\binom{deg(w)+1}{2}  - \binom{deg(w)}{2}  = deg(w)  > 0.\]


We conclude that to maximize $e(L(G))$ subject to $e(G)=N$ and $\Delta(G)=\Delta$ it must be that  all the $t$ edges of $H$ are packed in $N(u)$,  and since $H$ contains no isolated vertices it follows that  $V(H)$ is fully contained in $N(u)$, for otherwise we can continue to apply the step above. 

So now we have a graph $G$ on  $N-t +1= |N[u]| $ vertices  with maximum degree  $N-t \geq  t +1$, and the rest of the $t$ edges form a graph $H$ packed in $N(u)$.  We claim that to maximize $e(L(G))$ it suffices to maximize $e(L(H))$, which  by Lemma \ref{maxedges} can only be done if $H$ is $K_{1,t}$ and in case $t = 3$ also by $H =  K_3$.

 To prove this claim, suppose $H$ and $F$ are two connected  graphs having $t$ edges that we want to pack in $N(u)$.  Let the degrees of $H$ be $x_1,\ldots,x_r$ and the degrees of $F$ be $y_1,\ldots,y_s$.  Observe that \[\sum_{i=1}^{r} x_i = \sum_{j=1}^{s} y_j = 2t\] and assume $e(L(H))  > e(L(F))$.

We denote by $H^*$ respectively $F^*$ the graphs obtained by packing $H$ respectively $F$ in $N(u)$.  Clearly 
\[e(L(H^*))  - e(L(F^*)) =\]\[  \sum_{i=1}^{r} \binom{ x_i +1}{2} -\sum_{j=1}^{s} \binom{y_j +1}{2}  =\sum_{i=1}^{r} \left (\binom{x_i}{2} + x_i \right ) -  \sum_{j=1}^{s} \left (\binom{y_j}{2} + y_j \right )=\]\[  \sum_{i=1}^{r} \binom{x_i}{2} + \sum_{i=1}^{r} x_i  -  \sum_{j=1}^{s} \binom{y_j}{2} -\sum_{j=1}^{s} y_j  =\sum_{i=1}^{r} \binom{x_i}{2}  -  \sum_{j=1}^{s} \binom{y_j}{2} =\]\[ e(L(H)  - e(L(F)) > 0,\]

 proving the claim.

Hence by Lemma \ref{maxedges}, $H$ is  $K_{1,t}$ and in case $t = 3$, $H$ can be also  $K_3$.

Now the extremal  graph $Q(N,t )$ is uniquely determined by the degree sequence, namely  there are two adjacent vertices $u$ and $v$ with $deg(u)  = \Delta(G) = N-t$  and  $deg(v)  = t +1$  and there are $t$ more vertices of degree 2 while the rest have degree 1.

In case $t = 3$ and  $\Delta = N - 3$ there is another extremal graph, denoted $Q^*(N,3)$,  obtained  by packing $K_3$ in $N(u)$ where  $deg(u) = \Delta$.  
Furthermore a simple calculation reveals that $e(L(Q(N,t)))  =   \binom{N-t}{2}+ \binom{t +2}{2}  - 1  =  f(N,\Delta)$,  and monotonicity of $f(N,\Delta)$ for fixed $N$ and $\Delta > \frac{N}{2}$ follows once again by convexity since  $t < N /2$,  so decreasing $t$ increases $f(N,\Delta)=f(N,N-t)$.          

 \end{proof}

\begin{figure}[H]
\centering
\includegraphics[scale=0.8]{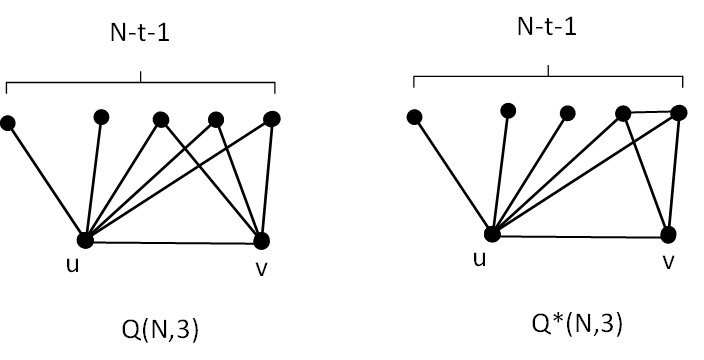} 
\caption{Q graphs}
\label{fig1}
\end{figure}

Since Lemma \ref{extremal} deals with the case where $\Delta > N/2$, the next Lemma deals with the case $\Delta=N/2$.

\begin{lemma} \label{evenN}
Let $N$ be even.  Then,
\begin{enumerate}
\item{for $N \geq 6$, $ f(N,\frac{N}{2}) =  \frac{N^2}{4} +3$.}
\item{for $N \geq 10$, $f(N,\frac{N}{2} +1)  > f(N,\frac{N}{2} )$.}
\end{enumerate}
\end{lemma}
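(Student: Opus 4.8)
The plan is to establish item~1, that $f(N,\tfrac N2)=\tfrac{N^2}{4}+3$, by a tight construction together with a structural upper bound, and then to deduce item~2 from item~1 and Lemma~\ref{extremal}.

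For the lower bound in item~1, set $m=\tfrac N2$ and take $G$ on the $m+1$ vertices $u,c,w_1,\dots,w_{m-1}$, where $u$ is adjacent to every other vertex, $c$ is adjacent to $w_1,\dots,w_{m-1}$, and there is one further edge $w_1w_2$. Then $e(G)=m+(m-1)+1=N$ and $\Delta(G)=m=\tfrac N2$ (attained by both $u$ and $c$), and the degree sequence is $(\tfrac N2,\tfrac N2,3,3,2,\dots,2)$, so $e(L(G))=2\binom{m}{2}+2\binom{3}{2}+(m-3)\binom{2}{2}=m^2+3=\tfrac{N^2}{4}+3$; for $N=6$ this graph is $K_4$. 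Hence $f(N,\tfrac N2)\ge\tfrac{N^2}{4}+3$.

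For the matching upper bound I would argue exactly as in the proof of Lemma~\ref{extremal}. By Lemma~\ref{realisef} choose a connected $G$ with $e(G)=N$, $\Delta(G)=\tfrac N2$, realising $f(N,\tfrac N2)$, fix a vertex $u$ of degree $\tfrac N2$, and let $H$ be the subgraph formed by the $t:=\tfrac N2$ edges not meeting $u$ (no isolated vertices). Running the merging and relocation moves of Lemma~\ref{extremal} — identifying leaves of two components of $H$, and pushing a vertex of $H$ lying outside $N(u)$ onto a pendant neighbour of $u$ — reduces the problem to one of three configurations: (i) $H$ is a star whose centre is the unique vertex of $H$ outside $N(u)$, forcing degree sequence $(\tfrac N2,\tfrac N2,2,\dots,2)$ and $e(L(G))=\tfrac{N^2}{4}$; (ii) $V(H)\subseteq N(u)$, in which case, since $|N(u)|=\tfrac N2=e(H)$, $H$ is connected on exactly $\tfrac N2$ vertices with $\tfrac N2$ edges, hence unicyclic; or (iii) $H$ is a tree with exactly one vertex outside $N(u)$. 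In case (ii) every vertex of $N(u)$ has $G$-degree one more than its $H$-degree, so one must maximise $\sum_v\binom{deg_H(v)+1}{2}$ over unicyclic $H$ on $\tfrac N2$ vertices with $deg_H(v)\le\tfrac N2-1$; by convexity the optimum concentrates the degree at a single hub $c$ of $H$-degree $\tfrac N2-1$ with the one remaining edge joining two of its leaves, yielding exactly the extremal graph above and value $\tfrac{N^2}{4}+3$. A short convexity estimate shows configuration (iii) gives strictly fewer edges than this, and configuration (i) gives $\tfrac{N^2}{4}$, both strictly smaller; the case $N=6$ is checked by hand. Hence $f(N,\tfrac N2)=\tfrac{N^2}{4}+3$ for $N\ge6$.

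Item~2 is then immediate: since $N$ is even, $\Delta=\tfrac N2+1>\tfrac N2$, so Lemma~\ref{extremal} applies with $t=N-\Delta=\tfrac N2-1$ (valid as $N\ge 2t+1$), giving $f(N,\tfrac N2+1)=2\binom{N/2+1}{2}-1=\tfrac{N^2}{4}+\tfrac N2-1$; subtracting the value from item~1 gives $f(N,\tfrac N2+1)-f(N,\tfrac N2)=\tfrac N2-4>0$ exactly when $N\ge10$. The main obstacle is the upper bound in item~1: because here $|N(u)|=\tfrac N2$ coincides with the number of non-$u$ edges, the packing moves from Lemma~\ref{extremal}, which were designed for the strictly larger neighbourhood of the case $\Delta>\tfrac N2$, must be re-examined one by one to check that they never push the maximum degree above $\tfrac N2$ and that the few configurations they cannot reduce are precisely (i) and (iii), disposed of by direct computation; once the reduction to ``$H$ unicyclic inside $N(u)$'' is in place, the remaining optimisation is a routine convexity argument.
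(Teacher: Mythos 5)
Your lower-bound construction and your item~2 are correct and coincide with the paper's: the graph with degree sequence $(\tfrac N2,\tfrac N2,3,3,2,\dots,2)$ gives $\tfrac{N^2}{4}+3$, and $f(N,\tfrac N2+1)=2\binom{N/2+1}{2}-1$ from Lemma~\ref{extremal} yields the difference $\tfrac N2-4>0$ for $N\ge 10$, exactly as in the paper. The gap is in your upper bound for item~1. You propose to rerun the merging and relocation moves of Lemma~\ref{extremal}, but those moves are established only under the hypothesis $N\ge 2t+1$, i.e.\ $|N(u)|=N-t\ge t+1$, which is precisely what fails here ($t=\tfrac N2=|N(u)|$): the identification move when one of the two merged vertices lies in $N(u)$ can create a vertex of degree $t+1=\tfrac N2+1>\Delta$, and the relocation of a vertex $w\in V(H)\setminus N(u)$ needs both a pendant neighbour of $u$ outside $V(H)$ (no longer guaranteed since $|V(H)|$ can equal $|N(u)|$) and $1+\deg_H(w)\le\tfrac N2$ (which fails when $w$ is a large hub). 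You explicitly defer this re-examination, but it is the entire content of the upper bound, so as written the proof is not complete. Moreover your reduction to configurations (i)--(iii) is not exhaustive (e.g.\ $H$ connected, containing a cycle, with one vertex outside $N(u)$ is in none of them), and the claim in (ii) that $V(H)\subseteq N(u)$ forces $H$ to be unicyclic on exactly $\tfrac N2$ vertices, with a unique optimal hub graph, is false: for $N=12$ one may take $H=K_4$ on four neighbours of $u$ and still attain $39=\tfrac{N^2}{4}+3$. (The correct optimisation in case (ii) is to maximise $e(L(H))$ subject to $e(H)=\tfrac N2$ and $\Delta(H)\le\tfrac N2-1$, whose maximum value $\binom{N/2-1}{2}+2$ does give $\tfrac{N^2}{4}+3$, but that argument is not what you wrote and the extremal $H$ is not unique.)

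For comparison, the paper does not reuse Lemma~\ref{extremal} at all for item~1. It argues directly: a graph with $N$ edges can have at most two vertices of degree $\tfrac N2$ (three would force more than $N$ edges), by convexity an optimal $G$ has exactly two such vertices $u,v$, and one then splits on whether $u$ and $v$ are adjacent; a direct convexity computation gives $\tfrac{N^2}{4}$ in the non-adjacent case and $\tfrac{N^2}{4}+3$ in the adjacent case. This sidesteps the whole question of whether the packing moves survive the tight regime $\Delta=\tfrac N2$; if you want to keep your route, you must actually carry out the case-by-case verification of the moves under the degree cap $\tfrac N2$ and repair the case list, which is likely to end up longer than the paper's two-case argument.
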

\begin{proof}
$\mbox{ }$\\
\begin{enumerate}
\item{For $N  = 6$,  $\Delta = 3$ we know that $f(6,3)=  12  =  \frac{N^2}{4} +3$.   

So we assume  that $N \geq 8$.  Consider $G$ with $N$ edges such that $L(G)$ realises $f(N,N/2)$.  We can have at most two vertices of degree  $\frac{N}{2}$  since otherwise if we have three vertices of degree $\frac{N}{2}$ then $e(G) \geq \frac{3N}{2}  - 3  > N$  for $N \geq 7$.  Hence by convexity, to optimize $e(L(G))$ there must be two vertices $u$  and $v$  of degree $\frac{N}{2}$.  These two vertices of degree $\frac{N}{2}$ are incident with at least $N-1$ edges and this occurs when they are adjacent. 

\noindent Case A:  $u$ and $v$ are not adjacent. 

Then they are incident with $N$ edges and every vertex of $G  \backslash \{ u,v \}$  is adjacent to either $u$  or $v$ or both and have degree either 1 or 2.  By convexity $e(L(G))$ is maximized when  all vertices of $G  \backslash \{ u,v \}$ are incident to both $u$  and $v$, and have degree 2, and in this case there are $\frac{N}{2}$ such vertices.  Furthermore $e(L(G)) =  2\binom{ N/2}{2} + \frac{N}{2}  =  \frac{N^2}{4}$. 

\noindent Case B:  $u$  and $v$ are adjacent. 

Then they are incident with $N - 1$ edges of which $N - 2$ edges are adjacent to other vertices.  Consider all the vertices incident to either $u$ or $v$ or both.  Without the edge which is not incident with $u$ and $v$,  all these vertices may have degrees either 1 or 2  and  the last edge can be incident to at most two of these  vertices raising their degree to at most 3.   By convexity, the maximum is realised when there are two vertices of degree $\frac{N}{2}$,  two vertices of degree 3 and the rest are vertices of degree 2.  This is realised by a graph $G$ with two vertices $u$ and $v$  of degree $\frac{N}{2}$, and $\frac{N}{2} -1$ other vertices all adjacent to both $u$  and $v$, and of which a pair of vertices $ x$ and $y$ are  adjacent by an edge and have degree 3.

Now $e(L(G)) =  2\binom{N/2}{2}  +6 +\frac{N}{2} -3  =  \frac{N^2}{4} +3$.
}

\item{Observe that $f(6,3) = 12 > f(6,4) = 11$, $f(8,4)  = f(8,5) = 19$ and $f(9,4) = f(9,5) = f(9,6) =  24$.

But for $N \geq 10$ we equate, applying Lemma \ref{extremal}, $f(N, \frac{N}{2} +1)$  to $f(N,\frac{N}{2}) =  \frac{N^2}{4} +3$   to get
 \[2\binom{\frac{N}{2} +1}{2} -1  =  \frac{N}{2} \left(\frac{N}{2}+1 \right)  - 1  =  \frac{N^2}{4} + \frac{N}{2} - 1  >  \frac{N^2}{4} +3\] which holds true for  $N \geq 10$.}
\end{enumerate}
\end{proof}

Remark : Lemmas \ref{monof} part 4, \ref{extremal} and \ref{evenN} show that  in fact $f(N,\Delta)$  is monotone increasing for $N \geq 8$ and $\Delta\geq \frac{N}{2}$ and strictly monotone increasing for $N\geq 10$ and $\Delta \geq \frac{N}{2}$.      

\begin{lemma} \label{upperbound}
For  $\Delta \leq \frac{N}{2}$ and $N \geq 12$,  $f(N,\Delta )  \leq \frac{N^2}{3}$. 
\end{lemma}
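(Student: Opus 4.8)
Let $G$ be a graph with $e(G)=N$, $\delta(G)\ge 1$ and $\Delta(G)=\Delta\le N/2$, and write $d_v$ for the degree of $v$ in $G$. By Fact 1 the task is to bound $e(L(G))=\sum_v\binom{d_v}{2}$ subject to $\sum_v d_v=2N$ and $d_v\le\Delta$. The degree constraints alone are far too weak (they permit $\sum_v d_v^2$ to be as large as $\approx N^2$, well above the target $\tfrac{2N^2}{3}+2N$), so the argument must genuinely use that $G$ is a graph. The plan is to split on the size of $\Delta$ at the threshold $\Delta=\lfloor N/3\rfloor+1$, using a crude convexity bound in the ``small'' regime and a vertex-deletion argument combined with Lemma \ref{maxedges} in the ``large'' regime.

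For the small regime $\Delta\le\lfloor N/3\rfloor+1$, convexity suffices on its own: since $d-1\le\Delta-1$ we have $\binom{d_v}{2}\le\frac{\Delta-1}{2}d_v$ for every $v$, so
\[ e(L(G))\ \le\ \frac{\Delta-1}{2}\sum_v d_v\ =\ (\Delta-1)N\ \le\ \lfloor N/3\rfloor\,N\ \le\ \frac{N^2}{3}. \]

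For the large regime $\lfloor N/3\rfloor+2\le\Delta\le N/2$, I would delete a vertex $v_1$ of maximum degree and set $G'=G-v_1$, so $e(G')=N-\Delta$. Counting paths on three vertices according to whether they avoid $v_1$, are centred at $v_1$, or have $v_1$ as an endpoint gives
\[ e(L(G))\ =\ e(L(G'))\ +\ \binom{\Delta}{2}\ +\ \sum_{u\sim v_1}\bigl(deg(u)-1\bigr). \]
Here $\sum_{u\sim v_1}(deg(u)-1)=\sum_{u\sim v_1}deg_{G'}(u)\le 2e(G')=2(N-\Delta)$, and $e(L(G'))\le\binom{N-\Delta}{2}$ by Lemma \ref{maxedges}. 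Writing $s=N-\Delta$, the hypothesis on $\Delta$ forces $s$ into the interval $[N/2,\,2N/3-1]$, and
\[ e(L(G))\ \le\ \binom{s}{2}+\binom{N-s}{2}+2s\ =\ s^2-(N-2)s+\binom{N}{2}, \]
a parabola in $s$ opening upward with vertex at $s=\tfrac{N-2}{2}<\tfrac N2$, hence increasing on the interval in question. Evaluating at the two endpoints, $s=N/2$ gives $\tfrac{N^2}{4}+\tfrac N2$ and $s=2N/3-1$ gives $\tfrac{5N^2}{18}+\tfrac N2-1$; a direct check shows both are $\le\tfrac{N^2}{3}$ once $N\ge 6$ (in particular for $N\ge12$). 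Since the two ranges of $\Delta$ together cover all $\Delta\le N/2$, this proves $f(N,\Delta)\le N^2/3$.

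The only non-routine step is the large-$\Delta$ case, and the key point there is that after removing a single maximum-degree vertex the residual line graph $L(G')$ lives on only $N-\Delta$ vertices, so Lemma \ref{maxedges} caps its edge count at $\binom{N-\Delta}{2}$; everything after that is the elementary maximisation of a quadratic. I expect the estimate to be tightest near $s\approx 2N/3$, i.e.\ $\Delta$ just above $N/3$, and to the extent that integer-part fiddling near $N=12$ is delicate, those smallest cases can be confirmed directly (consistent with the computer verification noted in the introduction).
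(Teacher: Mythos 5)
Your proof is correct, and for the harder half it takes a genuinely different route from the paper. The easy regime $\Delta\le\lfloor N/3\rfloor+1$ is handled the same way in both (the bound $e(L(G))\le N(\Delta-1)$, which the paper derives via the degrees of edges in $L(G)$ and you via $\binom{d}{2}\le\frac{\Delta-1}{2}d$). For $\lfloor N/3\rfloor+2\le\Delta\le N/2$, the paper argues by convexity on candidate degree sequences: it shows an optimal sequence has at most two entries equal to $\Delta$, a third entry at most $\frac{N}{3}-2t+3$, and all remaining entries at most $3$ (at most $\frac{N}{3}$ of them), then evaluates, obtaining the sharper bound $\frac{N^2+2N+12}{4}$; notably it does not even require the sequence to be graphical. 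You instead delete one maximum-degree vertex, use the exact identity $e(L(G))=e(L(G'))+\binom{\Delta}{2}+\sum_{u\sim v_1}(deg(u)-1)$, cap $e(L(G'))$ by $\binom{N-\Delta}{2}$ via Lemma \ref{maxedges} and the last sum by $2(N-\Delta)$, and maximize the resulting convex quadratic in $s=N-\Delta$ over $[N/2,\,2N/3-1]$, getting $\frac{5N^2}{18}+\frac{N}{2}-1\le\frac{N^2}{3}$ for $N\ge 6$ (I checked the identity, the range of $s$, and the endpoint evaluations; all are right, and your weaker constant still comfortably meets the stated bound). What your approach buys is a shorter, less computational argument that reuses Lemma \ref{maxedges} and makes the structural reason transparent; what the paper's buys is a tighter estimate ($\approx N^2/4$ rather than $\approx 5N^2/18$), which could matter if one wanted to push the threshold $N\ge 33$ in the proof of Theorem 1 downward.
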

 
\begin{proof}
We observe first that for every $N$ and  $\Delta$, an upper bound for $f(N,\Delta )$ is $N(\Delta-1)$, because each edge $e =xy$ in $G$ has as a vertex in $L(G)$, $deg(e )  = deg(x) +deg(y) - 2 \leq 2(\Delta-1)$. Hence $e(L(G)) \leq  \frac{2N(\Delta-1)}{2} = N(\Delta-1)$.  In particular, for $\Delta \leq \frac{N}{3} +1$,   $f(N,\Delta) \leq \frac{ N^2}{3}$.

We now compute an upper bound for $f(N,\Delta)$  when $\frac{N}{3} +2  \leq \Delta \leq \frac{N}{2}$.  This forces $N  \geq 12$.  Suppose $\frac{N}{3} +2 \leq \Delta =  \frac{N}{3} + t \leq \frac{N}{2}$  where $2\leq  t \leq \frac{N}{6}$.  Applying convexity on the ``proposed degree sequence  of  $G$" (because this is an upper bound that is maximized over all the sequence of integers $1\leq d_1 \leq d_2\leq \ldots \leq d_s  =  \Delta$  such that  $\sum_{j=1}^s  d_j = 2N$, some of which might not be a graphical sequence),  we have two vertices of degree $\Delta$  but we cannot have three vertices of degree $\Delta$ as then the number of edges in $G$ is at least $3\Delta -  3  =  3 \left (\frac{N}{3} +t  - 1 \right )  \geq N +3$.
 
So  we may have two vertices of degree $\frac{N}{3}  + t$  and the third greatest degree is at most $\frac{N}{3}  - 2t  +3$  as again  these three vertices force that the number of edges  in $G$ is at least  $2 \left (\frac{N}{3} +t \right)  + \left( \frac{N}{3} -2t +3 \right)- 3  = N$.

So, by convexity, we have two vertices of degree $\frac{N}{3} +t$, one vertex of degree $\frac{N}{3} -2t + 3$  and the rest of the vertices have degree at most 3 (since the three largest vertices are incident with $N$ edges hence no further edge is possible).  Therefore there are most $\frac{N}{3}$  vertices
 of degree 3 since $\sum deg(v) = 2N$. So
\[e(L(G)) \leq  2\binom{\frac{N}{3} +t}{2}  + \binom{\frac{N}{3} -2t +3}{2}  + \frac{3N}{3}.\]

Writing $x = \frac{N}{3}$, then   $t\leq \frac{x}{2}$   we get 
\[2\binom{x  +t }{2} + \binom{x- 2t +3}{2} + N   = \frac{ 2(x+t)(x+t - 1) + (x-2t +3)(x - 2t +2) +6x }{2}=\]\[   \frac { (  2x^2 +2xt -2x +2tx +2t^2 - 2t ) + (  x^2 -2xt +2x -2xt +4t^2  -4t +3x  - 6t +6 ) +6x }{2}=\]
\[\frac{3x^2 +9x +  6t^2 - 12t  +6}{2}  \leq \frac{3x^2 +9x +  6x^2/4  - 12x/2 +6}{2}=  \frac{12x^2 +36x + 6x^2  -  24x +24}{8} =\]\[ \frac{18x^2  +12x  +24}{8}=\frac{9x^2 + 6x + 12}{4 } =  \frac{N^2 + 2N  +12}{4}  < \frac{ N^2}{3}\] already for  $N  \geq 10$. Hence the assumption $N \geq 12$ suffices.

Hence  we get  $f(N,\Delta ) \leq \frac{N^2}{3}$  for  $\Delta\leq  \frac{N}{2}$  and $N \geq 12$. 

\end{proof}


For a given positive integer  $N$ we shall use the phrase that an interval of consecutive integers $[ a,a +1.. b-1,b ]$ is feasible if all pairs $(N,M)$,  $a \leq M \leq b$ are feasible pairs.

\begin{lemma} \label{intfeasdelta}
The interval  $[ \binom{\Delta}{2} ,\ldots, f(N,\Delta ) ]$ is feasible for $\Delta  > \frac{N}{2}$.
\end{lemma}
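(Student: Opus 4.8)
The plan is to realize every integer $M$ in the interval $[\binom{\Delta}{2}, f(N,\Delta)]$ as $e(L(G))$ for some graph $G$ with $e(G)=N$, by starting from a graph whose line graph has exactly $\binom{\Delta}{2}$ edges and incrementing the edge count of the line graph one step at a time, without ever overshooting $f(N,\Delta)$. First I would set $t=N-\Delta$, so that $t<N/2$, and recall from Lemma \ref{extremal} that $f(N,\Delta)=\binom{N-t}{2}+\binom{t+2}{2}-1$, realized by $Q(N,t)$: two adjacent vertices $u,v$ with $\deg(u)=\Delta=N-t$, $\deg(v)=t+1$, all $t$ other neighbours of $v$ lying in $N(u)$ and having degree $2$, the remaining $N-t-1-t$ neighbours of $u$ being leaves. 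The ``base point'' of the interval, $\binom{\Delta}{2}$, is realized by $K_{1,\Delta}\cup (N-\Delta)K_2 = K_{1,\Delta}\cup tK_2$, whose line graph is $K_\Delta$ plus $t$ isolated vertices, i.e.\ exactly $\binom{\Delta}{2}$ edges on $N$ vertices.

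The key step is to describe a monotone sweep from $K_{1,\Delta}\cup tK_2$ up to $Q(N,t)$ in which each elementary move adds exactly one edge to the line graph while keeping $e(G)=N$ and $\Delta(G)=\Delta$ fixed. The natural moves are: (i) take an edge $xz$ of one of the pendant $K_2$'s and re-attach it to $u$, turning $z$ into a new leaf of $u$ (this is impossible once $u$ has all $N-t$ of its slots used, but one checks it is never needed beyond that point); and (ii) take a leaf $z$ of $u$ and an isolated-edge endpoint, or another leaf, and move an edge so as to raise the degree of one neighbour of $u$ from $1$ to $2$, then $2$ to $3$, etc. Concretely, once $u$ has degree $\Delta$, we have $t$ remaining edges forming a graph $H$ to be packed into $N(u)$, and by the computation inside the proof of Lemma \ref{extremal}, $e(L(G))=\binom{\Delta}{2}+e(L(H))+\sum_{v\in V(H)}\deg_H(v)=\binom{\Delta}{2}+e(L(H))+2t$ once $H$ is fully inside $N(u)$; before $H$ is fully absorbed the count is strictly smaller. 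So it suffices to check two sub-ranges: the range from $\binom{\Delta}{2}$ up to $\binom{\Delta}{2}+2t$, covered by gradually pulling the $t$ loose edges in so that one more of them has an endpoint inside $N(u)$ at each step (each such step adds exactly $1$); and the range from $\binom{\Delta}{2}+2t$ up to $\binom{\Delta}{2}+2t+\binom{t}{2}=f(N,\Delta)$, covered by fixing $V(H)\subseteq N(u)$ and running $H$ through a sequence of $t$-edge graphs $t K_2 = H_0, H_1,\dots,H_{\binom{t}{2}} = K_{1,t}$ with $e(L(H_{i+1}))=e(L(H_i))+1$, which exists by the standard edge-switching / convexity argument (each switch of the form ``move one edge of $H$ onto a higher-degree vertex'' raises $e(L(H))$ by exactly $1$, as in item 1 of the convexity facts, provided the degrees in $H$ stay at most $\Delta-1$, which holds since $\Delta>N/2\ge t+1$).

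The main obstacle I anticipate is bookkeeping at the boundary between the two sub-ranges and verifying that no elementary move is forced to jump by more than $1$ or to violate $\Delta(G)=\Delta$ — in particular, when pulling a loose edge into $N(u)$, one must make sure there is always a \emph{leaf} of $u$ available to receive it (this uses $N-t\ge t+1$, i.e.\ $N\ge 2t+1$, which is exactly the hypothesis $\Delta>N/2$) and that the receiving vertex's degree does not exceed $\Delta$. A clean way to finesse this is to build the whole chain $G_0,G_1,\dots,G_{f(N,\Delta)-\binom{\Delta}{2}}$ explicitly via their degree sequences inside $N[u]$ and invoke Fact 2 / graphicality at each stage, then compute $e(L(G_i))$ by Fact 1 and observe the difference is always $1$; the strict-monotonicity observations already recorded in the proof of Lemma \ref{extremal} (``decreasing $t$ increases $f$'', ``before $H$ is fully absorbed the count is smaller'') do most of the work. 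Once the chain is in place, feasibility of every $(N,M)$ with $\binom{\Delta}{2}\le M\le f(N,\Delta)$ is immediate, proving the lemma.
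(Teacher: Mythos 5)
There is a genuine gap, and it is exactly the phenomenon this paper is about. In your second sub-range you need, for every integer $j$ with $0\le j\le\binom{t}{2}$, a graph $H$ with $t$ edges and $e(L(H))=j$, since once $V(H)\subseteq N(u)$ the count is $\binom{\Delta}{2}+2t+e(L(H))$ and nothing else varies. That is precisely the requirement that every pair $(t,j)$ be feasible for line graphs, which is false for $t\ge 5$: already $(5,9)$ is non-feasible (the paper's opening example --- no $5$-edge graph has a line graph with $9$ edges), and Theorem~1 gives whole blocks of missing values for larger $t$. Your claimed chain $tK_2=H_0,H_1,\dots,H_{\binom{t}{2}}=K_{1,t}$ with unit increments therefore does not exist; note also that the switching move you invoke changes $e(L(H))$ by $\deg(y)-\deg(x)+1$, which exceeds $1$ as soon as the receiving vertex has strictly larger degree, so even locally the increments cannot all be forced to be $1$. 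Since the lemma must be true (the interval \emph{is} feasible), the uncovered values have to be hit by configurations in which the $t$ auxiliary edges are \emph{not} all packed inside $N(u)$, and your plan provides no such configurations. A second, independent problem is your intermediate stage with $tK_2$ fully packed in $N(u)$: this needs $2t\le N-t$, but the lemma is required for every $\Delta>\frac{N}{2}$, i.e.\ every $t<\frac{N}{2}$, so for $\frac{N}{3}<t<\frac{N}{2}$ that stage does not exist and the first sub-range already stalls after $N-t<2t$ unit steps.

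The paper's proof avoids both issues by never trying to realize intermediate values through a $t$-edge graph spread over $N(u)$. It fixes one neighbour $v$ of $u$ and, at stage $k\le t$, converts $k$ of the loose edges into $k$ pendant leaves at $v$, giving $\binom{N-t}{2}+\binom{k+1}{2}$; it then identifies those leaves one at a time with leaves of $u$, each identification adding exactly $1$, which covers the whole gap $[\binom{k+1}{2},\binom{k+2}{2}-1]$ of length $k$ between consecutive triangular numbers. This uses only $t+1\le N-t$ vertices of $N[u]$ and reduces everything to triangular numbers plus unit steps, sidestepping the recursive feasibility problem that sinks your sub-range~2. If you want to salvage your outline, you would have to replace the ``morph $H$ inside $N(u)$'' phase by such a star-plus-identification sweep (or otherwise interleave partially absorbed edges), rather than relying on all values $e(L(H))$ being attainable.
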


\begin{proof}
Let $\Delta=N-t$, $t< \frac{N}{2}$.  We begin, at step 0, with the graph $G_0=K_{1,N-t} \cup tK_2$, with the vertex $u$ having degree $N-t$ with neighbours $v$ and $u_1,\ldots, u_{N-t-1}$.  Then $L(G_0)$ has $N$ vertices and $\binom{N-t}{2}$ edges.  We take the following steps:
\begin{enumerate}
\item{We remove one $K_2$ and add a new vertex $x_1$ incident to  $v$ in $K_{1,N-t}$.  This is graph $G_1$ and $L(G_1)$ has $N$ vertices and $\binom{N-t}{2}+1$ edges.  Then identify $x_1$ with $u_1$ and the line graph now has $\binom{N-t}{2}+2$ edges}
\item{We remove  $2K_2$ and add vertices $x_1$ and $x_2$ to $v$ to give the graph $G_2$ such that $L(G_2)$ has $\binom{N-t}{2}+3$ edges.  Again, identifying $x_1$ with $u_1$ gives a line graph with $\binom{N-t}{2}+4$ edges and then identifying $x_2$ with $u_2$ gives a line graph with $\binom{N-t}{2}+5$ }

\item{So at step $k$ we remove $kK_2$  and attach $k$ leaves  $x_1,\ldots,x_k$ to $v$ to give $G_k$ so that $L(G_k)$ has $\binom{N-t}{2}+\binom{k+1}{2}$ edges.  Then, step by step, we identify $x_j$ with $u_j$ for $j=1 \ldots k$, so that at each step we increase the number of edges in the line graph by 1.  We can thus cover the interval $[\binom{N-t}{2}+\binom{k+1}{2}, \ldots,\binom{N-t}{2}+\binom{k+2}{2}-1]$.  Figure \ref{fig2} illustrates this process.}
\end{enumerate}

Since there are  $t$ independent edges initially, we can continue up to step $t$, and the last move in step $t$ will give us a line graphs with $N$ vertices and $\binom{N-t}{2} +\binom{t+2}{2}-1=f(N,\Delta)$ edges (as proved in Lemma \ref{extremal}).

\begin{figure}[H]
\centering
\includegraphics[scale=0.5]{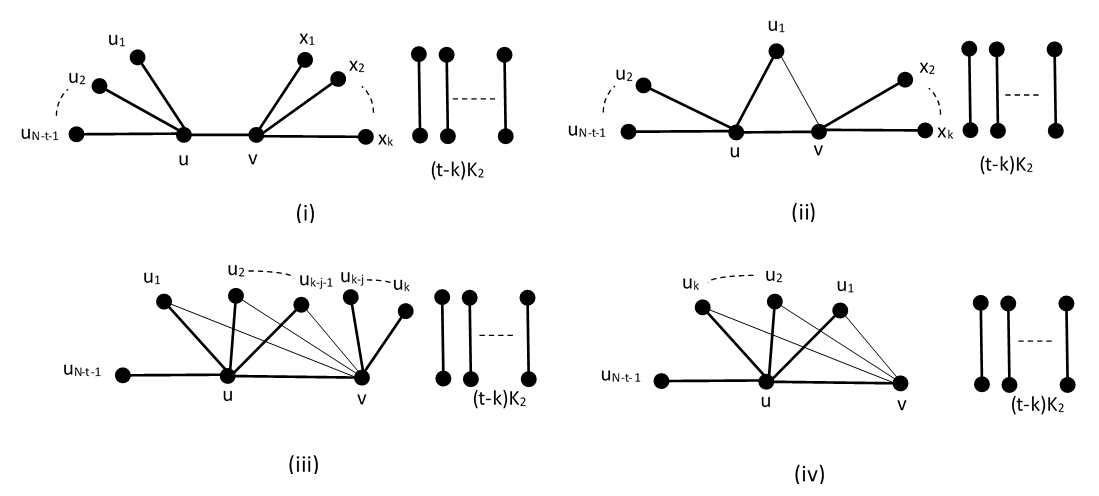} 
\caption{Step k}
\label{fig2}
\end{figure}
\end{proof}

\bigskip

\begin{lemma} \label{intfeas}
The interval $[0, \binom{\lfloor \frac{N}{2} \rfloor +1}{2} - 1 ]$ is feasible.
\end{lemma}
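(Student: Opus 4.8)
The plan is to realise every $M$ in the range by the line graph of a \emph{spider plus a matching} whose underlying graph has exactly $N$ edges. Write $k=\lfloor N/2\rfloor$. The endpoint $\binom{k+1}{2}-1$ is exactly one below the first triangular number that would force a vertex of degree $k+1$, which suggests slicing $[0,\binom{k+1}{2}-1]$ along consecutive triangular numbers and covering each slice by a single family of constructions.

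First I would dispose of $M=0$ by taking $G=NK_2$, so $L(G)$ is edgeless on $N$ vertices. For $M\ge 1$ I would let $\ell$ be the unique integer with $\binom{\ell}{2}\le M\le\binom{\ell+1}{2}-1$; since $1\le M\le\binom{k+1}{2}-1<\binom{k+1}{2}$ this forces $2\le\ell\le k=\lfloor N/2\rfloor$, and I would write $M=\binom{\ell}{2}+j$ with $0\le j\le \ell-1$. Then I would take $G$ to be the disjoint union of a spider with centre $u$ of degree $\ell$ having $j$ legs of length two and $\ell-j$ legs of length one, together with $N-\ell-j$ independent edges. Because $j\le\ell-1$ and $\ell\le\lfloor N/2\rfloor$ we get $\ell+j\le 2\ell-1\le 2\lfloor N/2\rfloor-1\le N-1$, so the matching part is well defined, $G$ has exactly $N$ edges and no isolated vertices, whence $n(L(G))=N$. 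The only vertices of $G$ of degree $\ge 2$ are $u$ (degree $\ell$) and the $j$ midpoints of the long legs (degree $2$), so by Fact~1, $e(L(G))=\binom{\ell}{2}+j\binom{2}{2}=\binom{\ell}{2}+j=M$.

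To finish, I would observe that as $\ell$ runs over $1,2,\dots,\lfloor N/2\rfloor$ the intervals $\left[\binom{\ell}{2},\binom{\ell+1}{2}-1\right]$ are consecutive and pairwise adjacent, so together with the point $M=0$ they cover precisely $\left[0,\binom{\lfloor N/2\rfloor+1}{2}-1\right]$; hence every such pair $(N,M)$ is feasible.

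I do not expect a genuine obstacle here: the only point requiring care is the edge-count bookkeeping, namely that $\ell+j\le N$ so that $N-\ell-j\ge 0$, and this is exactly where the cutoff $\lfloor N/2\rfloor$ enters; the degenerate small cases ($N\le 3$, where the interval collapses to $[0,0]$) are covered by $G=NK_2$. An alternative route would be to invoke Gauss's three-triangular-number theorem (Theorem~\ref{gausstriangle}) and realise $M=\binom{x}{2}+\binom{y}{2}+\binom{z}{2}$ by $K_{1,x}\cup K_{1,y}\cup K_{1,z}$ plus a matching, but this then requires arguing that one can always pick a representation with $x+y+z\le N$, which is fiddlier than the direct spider construction and is therefore the approach I would avoid.
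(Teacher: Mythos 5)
Your proposal is correct and follows essentially the same route as the paper: slice $[0,\binom{\lfloor N/2\rfloor+1}{2}-1]$ along consecutive triangular numbers indexed by the maximum degree $\ell\le\lfloor N/2\rfloor$, realise each $M=\binom{\ell}{2}+j$ by a star-based forest padded with a matching to reach $N$ edges, and apply Fact~1. The only (cosmetic) difference is that the paper supplies the extra $j$ line-graph edges by a disjoint path $P_{j+1}$ alongside $K_{1,\ell}$, whereas you lengthen $j$ legs of the star to create $j$ degree-two vertices; both verifications are the same bookkeeping.
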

\begin{proof}
We shall write $\Delta = N - t$ and assume $\Delta \leq \frac{ N}{2}$.  We will show that the interval $[0,\ldots, \binom{\lfloor \frac{N}{2} \rfloor +1}{2} - 1 ]$ is feasible by dividing it into subintervals depending on $\Delta$.

We consider the graph $G =  K_{1, N-t}  \cup  P_k  \cup (N-t-k)K_2$ where $P_k$ is the path on $k$ edges and  $ 0 \leq k \leq N-t$.  Clearly  $M= 0$  is realised by the line graph of $NK_2$  corresponding to the case $\Delta = 1$.

So assume  $\Delta \geq 2$  and observe that $\binom{\Delta +1}{2}  - 1 - \binom{\Delta}{2}=  \Delta-1$.  The line graph of   $K_{1, N-t} \cup   P_k  \cup  ( N-t-k)K_2$  has  $N$ vertices and $\binom{N-t }{2} +    (k-1)\binom{2}{2}  = \binom{(N-t }{2}  + (k-1)$  edges and since $1 \leq k \leq N-t$, the feasible pairs range from $M =  \binom{N - t}{2}$ up to $\binom{N-t}{2} +  N-t - 1$.

Hence the interval from $ \binom{\Delta}{2}$ to $\binom{\Delta}{2}+ N-t -1  \geq  \binom{\Delta}{2} + \Delta  - 1   =  \binom{\Delta +1}{2}-1$  is feasible.  This holds for  all  $\Delta \leq \left \lfloor \frac{N}{2} \right \rceil$ and the interval $[0,\ldots, \binom{\lfloor \frac{N}{2} \rfloor +1}{2} - 1 ]$ is feasible.

\end{proof}

We are now ready to prove Theorem 1 as stated in the introduction.
 \begin{theorem1}[The Intervals Theorem] 
For $N  \geq 5$,  all the values of $M$ for which $(N, M)$ is a non-feasible pair for the family of all line graphs, are exactly  given by all integers $M$ belonging to the following intervals:
\[\left [ \binom{N-t}{2} +\binom{t+2}{2}, \ldots,  \binom{N-t+1}{2}-1 \right ] \mbox{ for }  1 \leq t < \frac{ -5 + \sqrt{8N +17}}{2}\]

Observe that if  $\frac{ -5 + \sqrt{8N +17}}{2}$ is not an integer then $t= \left \lfloor \frac{ -5 + \sqrt{8N +17}}{2} \right \rfloor$ while if $\frac{ -5 + \sqrt{8N +17}}{2}$ is an integer then $t=\frac{ -5 + \sqrt{8N +17}}{2}-1$.

\end{theorem1}
 
\begin{proof} 

By  Lemma \ref{intfeas},  the  pairs $(N,M)$, for $M$ in the interval  $[0,\ldots, \binom{\lfloor \frac{N}{2} \rfloor +1}{2} - 1 ]$ are feasible.

By Lemma \ref{intfeasdelta}, for $\Delta = N-t  > \frac{N}{2}$, the interval  $[\binom{N-t}{2} \ldots \binom{N-t}{2}  + \binom{t +2}{2}-1]$ is fully realised and feasible. 

The right hand side of the above interval is the maximum possible value that can be attained by a line graph of a graph with $N$ edges and   $\Delta  = N -t > \frac{N}{2}$  as proved in  Lemma \ref{extremal}.

So we first try to cover, for $\Delta  = N-t > \frac{N}{2}$,  the interval $[\binom{\Delta}{2}  = \binom{N-t}{2},  \binom{\Delta + 1 }{2} -1   =\binom{N-t  +1}{2} -1]$  using graphs with $\Delta  = N-t > \frac{N}{2}$.  

Now the gap between these  triangular numbers can be covered by feasible values if and only if  $f(N,\Delta)  \geq  \binom{N - t +1 }{2}-1$.  This is because $f(N , \Delta)$  is monotone increasing for $\Delta > \frac{N}{2}$, and by Lemma \ref{intfeasdelta},  the interval   $[ \binom{\Delta}{2},f(N,\Delta]$  is feasible.  If however $f(N,\Delta)  <  \binom{N- t  +1}{2} -1$  then  all the interval  $[ f(N,\Delta) +1 \ldots  \binom{N-t +1 }{2} - 1 ]$  is non-feasible unless it can be covered by line graphs of graphs on $N$ edges and $\Delta \leq \frac{N}{2}$ which we will show to be impossible. 

Recall from  Lemma \ref{extremal}  that  for  $\Delta >\frac{N}{2}$, $f(N,\Delta ) =  \binom{N-t}{2} + \binom{t +2}{2} - 1$.  Hence we equate $f(N,\Delta) = f(N,N-t) =  \binom{N-t}{2} + \binom{t +2 }{2}  - 1 <     \binom{N - t  +1 }{2} -1$  and we shall express $t$ as a function of $N$.  Now

\[\frac{(N-t)(N-t-1)}{2} + \frac{(t+2)(t+1)}{2} < \frac{(N-t +1)(N-t)}{2}\]   hence   \[(t+2)(t+1) < (N-t)( N-t +1 -( N-t -1) )  = 2(N-t).\] This gives the quadratic  $t^2 +3t +2 < 2N-2t$  or $t^2 +5t +2 - 2N <0$.

Solving we get  $t =  \frac{-5 +  \sqrt{25  -8 + 8N}}{2}  = \frac{ -5 + \sqrt{8N +17}}{2}$.  Observe that if the expression is an integer than  it corresponds to equality above meaning it covers the end of the interval and we have to take $t -1$,  otherwise we take $\lfloor t \rfloor$.

Lastly  we have to show that  these non-covered values of $M$ by line graphs of graphs with $\Delta >  \frac{N}{2}$  cannot be covered by line graphs of graphs with $\Delta \leq \frac{N}{2}$.

However by Lemma \ref{upperbound}, for $N \geq 12$  and for $\Delta \leq \frac{N}{2}$,  we know that $f(N,\Delta ) \leq \frac{ N^2}{3}$.  So we have to consider $\frac{N^2}{3} \leq \binom{N - \frac{-5  + \sqrt{8N +17}}{2} }{2}$  which holds  for $N \geq 33 $


Now computing the values of $N$  for which new intervals start  we have the exact values stated in the theorem  which is thus proved for $N \geq 33$. 
 
For  $1 \leq N \leq 35$  we wrote a simple computer program  that checks which pairs $(N, M)$ are feasible and which are not.  This is a simple Mathematica program which generates all partitions of $2N$ into graphical sequences $d_1,\ldots,d_n$ and computes  $\sum_{j=1}^{n} \binom{d_j}{2}$.

This program confirmed Theorem 1 in this range, which completes the proof of Theorem 1 as well as the proof of Theorem 2 presented  below.

\end{proof}
\begin{theorem2}[The minimum non-feasible pair]
For  $N \geq 2$,  the minimum  value of $M$ which makes $(N,M)$ a non-feasible pair, for the family of all line graphs,  is $\binom{N-t}{2} + \binom{t +2}{2}$  where :
\begin{enumerate}
\item{$t  = \left \lfloor \frac{ -5 + \sqrt{8N +17}}{2} \right \rfloor$    if  $\frac{ -5 + \sqrt{8N +17}}{2}$  is not an integer.}
\item{$t =   \frac{ -5 + \sqrt{8N +17}}{2}  - 1$ if  $\frac{ -5 + \sqrt{8N +17}}{2}$  is an integer.}
\end{enumerate}
\end{theorem2}
\begin{proof}
We know from Theorem 1  that (for $N \geq 33$)  the equation determining, for given $N$,  the minimum $M$ making $(N,M)$ a non-feasible pair  is given by equating  \[f(N,\Delta) = f(N,N-t) =  \binom{N-t}{2}+ \binom{t +2}{2} - 1 <     \binom{N - t  +1}{2}-1.\]

As we have seen, solving for $t$ gives \[  t =\frac{ -5 + \sqrt{25-8+8N }}{2}=\frac{ -5 + \sqrt{8N +17}}{2}.\]

Recall that $t$ must be an integer,  hence if $t$  is not an integer we take $\lfloor t \rfloor$.  When $t$ is an integer, this is because we have equality, namely $\binom{  N - t +1}{2}  - 1  = f(N, N-t) = f(N,\Delta)$.  So in this case we have to replace $t$ by $t-1$, as explained in Theorem 1 and the theorem is proved for $N \geq 33$.
 
For $2 \leq N \leq 35$  we have computed  the minimum non-feasible $M$ using the above mentioned program which completes the proof. 

\end{proof}

\subsection*{Acknowledgements}

We would like to thank the referees whose careful reading of the paper and feedback have helped us to improve this paper considerably.

\bibliographystyle{plain}
\bibliography{linegraphsfeas1}

\begin{thebibliography}{10}

\bibitem{axenovich2021absolutely}
M.~Axenovich and L.~Weber.
\newblock Absolutely avoidable order-size pairs for induced subgraphs.
\newblock {\em arXiv preprint arXiv:2106.14908}, 2021.

\bibitem{barrus2008graph}
M.D. Barrus, M.~Kumbhat, and S.G. Hartke.
\newblock Graph classes characterized both by forbidden subgraphs and degree
  sequences.
\newblock {\em Journal of Graph Theory}, 57(2):131--148, 2008.

\bibitem{beineke70}
L.W. Beineke.
\newblock Characterizations of derived graphs.
\newblock {\em J. of Combin. Theory}, 9(2):129--135, 1970.

\bibitem{bose2008characterization}
P.~Bose, V.~Dujmovi{\'c}, D.~Krizanc, S.~Langerman, P.~Morin, D.R. Wood, and
  S.~Wuhrer.
\newblock A characterization of the degree sequences of 2-trees.
\newblock {\em Journal of Graph Theory}, 58(3):191--209, 2008.

\bibitem{caro2022index}
Y.~Caro, J.~Lauri, and C.~Zarb.
\newblock Index of parameters of iterated line graphs.
\newblock {\em Discrete Mathematics}, 345(8):112920, 2022.

\bibitem{cvetkovic1974some}
D.~Cvetkovic and S.~Simic.
\newblock Some remarks on the complement of a line graph.
\newblock {\em Publications de l’Institut Math{\'e}matique (Beograd)},
  17(31):37--44, 1974.

\bibitem{ERDOS199961}
P.~Erd\H{o}s, Z.~F{\"u}redi, B.L. Rothschild, and V.T. S\'{o}s.
\newblock Induced subgraphs of given sizes.
\newblock {\em Discrete Mathematics}, 200(1):61--77, 1999.

\bibitem{ewell}
J.A. Ewell.
\newblock On sums of triangular numbers and sums of squares.
\newblock {\em The American Mathematical Monthly}, 99(8):752--757, 1992.

\bibitem{FAUDREE199787}
R.~Faudree, E.~Flandrin, and Z.~Ryjacek.
\newblock Claw-free graphs — a survey.
\newblock {\em Discrete Mathematics}, 164(1):87--147, 1997.
\newblock The second Krakow conference of graph theory.

\bibitem{furedi2013history}
Z.~F{\"u}redi and M.~Simonovits.
\newblock The history of degenerate (bipartite) extremal graph problems.
\newblock In {\em Erd{\H{o}}s Centennial}, pages 169--264. Springer, 2013.

\bibitem{harary1969graph}
F.~Harary.
\newblock {\em Graph Theory}.
\newblock Addison-Wesley series in mathematics. Addison Wesley, 1969.

\bibitem{jensen1906fonctions}
J.L.W.V. Jensen et~al.
\newblock Sur les fonctions convexes et les in{\'e}galit{\'e}s entre les
  valeurs moyennes.
\newblock {\em Acta mathematica}, 30:175--193, 1906.

\bibitem{LAI200138}
H.~Lai and L.~Soltes.
\newblock Line graphs and forbidden induced subgraphs.
\newblock {\em Journal of Combinatorial Theory, Series B}, 82(1):38--55, 2001.

\bibitem{REZNICK1989199}
B.~Reznick.
\newblock The sum of the squares of the parts of a partition, and some related
  questions.
\newblock {\em Journal of Number Theory}, 33(2):199--208, 1989.

\bibitem{SOLTES1994391}
L.~Soltes.
\newblock Forbidden induced subgraphs for line graphs.
\newblock {\em Discrete Mathematics}, 132(1):391--394, 1994.

\bibitem{turan1941external}
P.~Tur{\'a}n.
\newblock On an external problem in graph theory.
\newblock {\em Mat. Fiz. Lapok}, 48:436--452, 1941.

\bibitem{west2001}
D.B. West.
\newblock {\em Introduction to graph theory}, volume~2.
\newblock Prentice hall Upper Saddle River, 2001.

\end{thebibliography}
 
\end{document}